\newcommand{\syst}[2]{\e{\left\{\tab{#1}{#2} \right. }}
\newcommand{\systnn}[2]{\enn{\left\{\tab{#1}{#2} \right. }}
\newcommand{\Ok}{{\Omega_k}}
\newcommand{\bOk}{{\partial \Omega_k}}
\newcommand{\skj}{{\Sigma_{kj}}}
\newcommand{\sjk}{{\Sigma_{jk}}}
\newcommand{\Gk}{{\Gamma_k}}
\newcommand{\trr}{(-\partial_\nu + i \gamma)}
\newcommand{\trs}{( \partial_\nu + i \gamma)}
\newcommand{\unNh}{[\![ 1, N_h ]\!]}
\newcommand{\e}[1]{\begin{equation}#1\end{equation}}
\newcommand{\enn}[1]{\begin{equation*}#1\end{equation*}}
\newcommand{\tab}[2]{\begin{array}{#1}#2\end{array}}
\newcommand{\dx}{\partial_x}
\newcommand{\ddx}{\partial_x^2}
\newcommand{\dy}{\partial_y}
\newcommand{\ddy}{\partial_y^2}
\newcommand{\lu}[2]{\lambda_{#1,#2}}
\newcommand{\luoz}{\lambda_{1,0}}
\newcommand{\luzo}{\lambda_{0,1}}
\newcommand{\luij}{\lambda_{i,j}}
\newcommand{\dux}{\partial_x}
\newcommand{\duy}{\partial_y}
\begin{document}

\title{Interpolation properties of generalized plane waves
}


\author{Lise-Marie Imbert-G\'erard
}


\institute{Lise-Marie Imbert-G\'erard \at
              first address \\
              Tel.: +123-45-678910\\
              Fax: +123-45-678910\\
              \email{imbertgerard@cims.nyu.edu}           
}

\date{Received: date / Accepted: date}

\maketitle

\begin{abstract}
This paper aims at developing new shape functions adapted to smooth vanishing coefficients for scalar wave equation. It proposes the numerical analysis of their interpolation properties. The interpolation is local but high order convergence is shown with respect to the size of the domain considered. The new basis functions are then implemented in a numerical method to solve a scalar wave equation problem with a mixed boundary condition. The order of convergence of the method varies linearly with the one of the interpolation.
\keywords{Generalized Plane Waves \and smooth non constant coefficient \and interpolation properties \and high order method \and scalar wave equation}
\end{abstract}

\section{Introduction}
\label{intro}
This paper focuses designing Generalized Plane Waves (GPW) to approximate smooth solutions $u\in \mathcal C^\infty(\Omega)$ of the model problem 
\e{\label{eq:Helm}
-\Delta u + \beta u = 0,
\quad \text{ in } \Omega \subset \mathbb R^2,
}
where $\beta$ is in $\mathcal C^\infty(\Omega)$.  
This time-harmonic equation, generally called the scalar wave equation, models for instance the acoustic pressure describing the behavior of sound in matter or a polarized electromagnetic wave propagating in an isotropic medium.
If $\beta=-\omega^2$, $\omega\in \mathbb R$, the equation is classically named the Helmholtz equation and is still the subject of recent research, see for instance \cite{Euan}. If $\beta<0$ is non constant, this is a simple model of wave propagation in an inhomogeneous medium. If $\beta>0$, it models an absorbing medium and the partial differential equation is coercive.
The applications considered here include both propagative and absorbing medium, as well as smooth transitions in between them, i.e. respectively $\beta>0$, $\beta<0$ and $\beta=0$.

Several types of numerical methods are used for the simulation of wave propagation. Classical finite element methods applied to such problems are known to be polluted by dispersion, see \cite{poll}.
An alternative is to consider approximation methods based on shape functions that are local solutions of the homogeneous equation: this justifies the development of Trefftz-based methods, first introduced in \cite{trefftz}, that rely on solutions of the homogeneous governing domain equation: information about the problem is embedded in the finite dimension basis functions set. 
The present work originated from the idea to apply such a method to a problem modeled by \eqref{eq:Helm} in which the coefficient is likely to vanish: shape functions adapted to this problem are here designed and studied. See the previous work \cite{LMIGBD} for the physical motivation of the problem.
Refer to \cite{plu2} and references therein for more recent developments of these Trefftz-based methods, and to \cite{git,HMP13} for applications linked to one specific method, the so-called Ultra Weak Variational Formulation (UWVF). The method coupling the latter to the adapted shape functions is the topic of \cite{LMIGBD}, and the present work includes numerical results for the $h$ convergence of the coupled method.

The novelty in the present paper lies in the smooth feature of the coefficient $\beta$ of the governing domain equation \eqref{eq:Helm} and on the explicit procedure proposed to design the corresponding shape functions. This work can be compared to recent works that focus on non polynomial methods for smooth varying coefficients, see for instance \cite{Bet,Tez}.
The design of shape functions adapted to smooth and possibly vanishing coefficients that is the core of this work starts from mimicking the equation
\enn{
(-\Delta + \beta)e^{I\omega \overrightarrow k \cdot \overrightarrow x} 
=\left( -(I\omega\|\overrightarrow k \|)^2-\omega^2\right)e^{I\omega \overrightarrow k \cdot \overrightarrow x} = 0,}
that shows that classical plane waves functions $e^{I\omega \overrightarrow k \cdot \overrightarrow x}$ are exact solutions of \eqref{eq:Helm} when $\beta=-\omega^2$ is constant and negative.

The case of a piecewise constant coefficient is addressed for example in \cite{cd98,nc}, and the more general case of a smooth coefficient is generally approximated by a piecewise constant coefficient on each cell of the mesh. 
A very simple extension of classical plane waves for a positive or negative constant coefficient would be to consider at a point $G=(x_G,y_G)$ the shape function 
\e{
\label{eq:simplGPW}
\varphi(x,y)=\exp \left(\sqrt{\textrm{sgn}(\beta(G))}\sqrt{|\beta(G)|}\left((x-x_G)\cos \theta + (y-y_G) \sin \theta\right) \right),
} 
where the parameter $\theta$ represents the direction of the plane wave. 
Indeed, the case $\beta(G)<0$ corresponds to the classical plane wave whereas the case $\beta(G)>0$ corresponds to a complex wavenumber. This choice will provide a tool to extend the interpolation results cited previously. 
Remark that if the coefficient $\beta(G)<0$ goes to zero then the corresponding classical plane waves functions, generates by equi-spaced directions $\theta$, tend not to be independent anymore.  

Consider here the case of a general smooth coefficient $\beta$. 
Typically, in the case $\beta(x,y)=x$ the Airy functions $Ai$ and $Bi$ are solutions ; 
however, in the general case there is no exact analytic solution known. Indeed, as was explained in Section 2.1 of \cite{LMIGBD}, no exponential of a polynomial can solve a generic scalar wave equation. So the idea is to generalize the classical plane wave function as an approximated solution of the initial equation, in the following sense : design $\varphi = e^{P}$, with a complex polynomial $\displaystyle P(x,y)=\sum_{i=0}^{{\rm d}P} \sum_{j=0}^{{\rm d}P-i} \lu{i}{j} (x-x_G)^i (y-y_G)^j$ such that
\enn{(-\Delta  + \beta ) e^{P} =\big(-(\ddx P + (\dx P)^2 +\ddy P + (\dy P)^2) + \beta\big) e^{P}}
is \emph{locally} small. 
More precisely, the generalized plane wave described in this paper will be designed to satisfy \emph{locally}
\e{\label{-lapl+al}
 -\left(\ddx P + (\dx P)^2 +\ddy P + (\dy P)^2\right)(x,y) + \beta(x,y) = O \left(\| (x,y)-(x_G,y_G) \|^q\right),
}
up to a given order $q\in\mathbb N$, satisfying $q\geq 1$.
In this process, the degree and coefficients of $P$ will be chosen to satisfy \eqref{-lapl+al}, and this approximation identity is equivalent to canceling the $q(q+1)/2$ coefficients of lower degree terms in the Taylor expansion of its left hand side. It provides a system which unknowns are  the coefficients of $P$ and which size does depend on $q$. This resulting system can be either underdetermined or overdetermined, depending on the degree of $P$, denoted ${\rm d}P$, with respect to the value of $q$. A specific procedure will be described in order to obtain a square invertible system. Its main feature is based on the idea of generalizing the classical plane wave function as displayed for $\beta(G)<0$ in \eqref{eq:simplGPW}, by setting $P(x,y)=\sqrt{\beta(G)}((x-x_G)\cos \theta + (y-y_G)\sin\theta) +$ {\it higher order terms}. As a result the reasoning leading to interpolation properties of such new functions is built on the simpler case of classical plane waves. 

In addition to the approximation order $q$, the general design procedure proposed in this paper involves two parameters:
\begin{itemize}
\item a parameter $\theta$ corresponding to the direction of a classical plane wave,
\item a parameter $N\neq 0$ , where $N/I$ can be interpreted as the local wave number of a classical plane wave.
\end{itemize}
These parameters are used to set $(\luoz,\luzo)=N (\cos \theta,\sin\theta)$. It justifies the name given to the new shape functions: generalized plane waves. Additional constraints greatly simplify both the numerical computations 
and the analysis of the method. See Definition \ref{def:GPW}.

For a given value of $(N,\theta)$, formula \eqref{eq:TExy} together with the Definitions \ref{def:AN} and \ref{def:CN} provide an explicit function $\varphi=e^P$, which satisfies the approximation identity \eqref{-lapl+al}. Varying $\theta$ then provides different functions $\varphi$, only as long as $N\neq 0$. This condition $N\neq 0$ is mandatory to define a set of linearly independent shape functions: they then form a basis of an approximation space $\mathcal E (G,N,p,q)$.
\begin{definition}\label{def:E(G,p)}
Suppose $N\in\mathbb C$ such that $N\neq 0$ and $p\in \mathbb N$ is such that $p\geq3$. Consider then for all $l \in [\![1,p]\!]$
\begin{itemize}
\item $\theta_l=2\pi (l-1)/p$ a direction, all directions being equi-spaced, 
\item $(\luoz^l,\luzo^l)=N (\cos \theta_l,\sin\theta_l)$ the corresponding coefficients of the degree one terms,
\item $\varphi_l $ the corresponding generalized plane wave.
\end{itemize}
 The set of $p$ shape functions denoted $\mathcal E (G,N,p,q)$ is defined by $\left\{ \varphi_l \right\}_{l\in[\![1,p]\!]}$.
\end{definition}
 This set of $p$ basis functions is meant to approximate the solution of scalar wave equation. The goal of the theoretical part of this paper is to prove high order approximation properties on such sets of basis functions, provided that enough basis functions are used with respect to the approximation parameter $q$. Note that the design process does not involve the number $p$ of basis functions. See the hypothesis of the following claim to quantify the relation between the parameters $p$ and $q$.
  
The parameter $N$ is then the main degree of freedom to be fixed to define explicitly the approximation space $\mathcal E(G,N,p,q)$. As will be detailed in Section \ref{sec:des}, two different choices will be considered in this paper. 
A first choice is comprised of setting $N=\sqrt{\beta(G)}$, see Definition \ref{def:AN}. It gives a direct generalization of a classical plane waves, since in this case $\beta(G)<0$, so that $\sqrt{-\beta(G)}$ is the local wave number. However, this choice is local since $N$ does depend on $G\in \Omega$, and cannot be used if $\beta(G)=0$: it is a classical problem in low frequency regime since the linear independence of such shape functions is damaged, see \cite{LFHipt,imbert}.

To overcome this limitation and consider the stationary limit case, a second possibility is to choose one constant and non zero value for $N$: it will not depend on $G$ anymore. The choice $N=I=\sqrt{-1}$ is proposed to address the case $\beta(G)=0$ and ensure the desired interpolation property.

\begin{claim}
Denote by $n\in\mathbb N$ an interpolation parameter and by $G\in \Omega$ a point in $\mathbb R^2$. Consider a smooth solution $u\in\mathcal C^{n+1}(\Omega)$ of scalar wave equation \eqref{eq:Helm}. Set then
\begin{itemize}
\item $N\neq 0$ the degree of freedom in the design process,
\item $q\geq n+1$ the order of approximation in \eqref{-lapl+al},
\item $p=2n+1$ the number of basis functions in $\mathcal E (G,N,p,q)$.
\end{itemize}   
There exists  an approximation $u_a\in Span\ \mathcal E (G,N,p,q)$ of order $n+1$ of $u$ in the following sense: there is a constant $C_{N,\Omega}$ such that for all $M=(x,y)\in\Omega$
 \enn{
\left\{ 
\begin{array}{l}
\left| u(M)-u_a(M)\right| \leq C_{N,\Omega} |M-G|^{n+1} \left\| u \right\|_{\mathcal C^{n+1}} ,\\
\left\| \nabla u(M)-\nabla u_a(M)\right\| \leq C_{N,\Omega} |M-G|^{n} \left\| u \right\|_{\mathcal C^{n+1}}. 
\end{array}
\right.
}
\end{claim}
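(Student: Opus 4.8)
My plan is to reduce the statement to a Taylor-matching argument inside a finite-dimensional ``approximate solution space'', combined with an invertibility property inherited from classical plane waves.

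First I would expand $u$ in its Taylor polynomial $T_n u$ of degree $n$ at $G$. Because $u$ is an exact solution of \eqref{eq:Helm} and $\beta\in\mathcal C^\infty$, the Taylor coefficients of $u$ are not free: inserting the expansion into $-\Delta u+\beta u=0$ and collecting the terms of total degree $d$ for $0\le d\le n-2$ yields $n(n-1)/2$ independent linear relations among the $(n+1)(n+2)/2$ coefficients of degree at most $n$, the independence coming from the surjectivity of $-\Delta$ from homogeneous degree $d+2$ onto homogeneous degree $d$. A dimension count then shows that $T_n u$ lies in a space $V_n$ of dimension $(n+1)(n+2)/2-n(n-1)/2=2n+1=p$. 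Moreover, since $u\in\mathcal C^{n+1}$, Taylor's theorem gives $|u-T_n u|\le C|M-G|^{n+1}\|u\|_{\mathcal C^{n+1}}$ and $\|\nabla(u-T_n u)\|\le C|M-G|^{n}\|u\|_{\mathcal C^{n+1}}$.

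Next I would expand each generalized plane wave $\varphi_l=e^{P_l}$ and denote by $\Phi_l$ its degree-$n$ Taylor polynomial at $G$. Since $\varphi_l$ satisfies the approximation identity \eqref{-lapl+al} with $q\ge n+1$, the residual $(-\Delta+\beta)\varphi_l$ vanishes at $G$ to order $q>n-2$, so the low-degree relations defining $V_n$ hold for $\Phi_l$ as well; hence $\Phi_l\in V_n$ for every $l$. The heart of the proof is to show that the family $\{\Phi_l\}_{l\in[\![1,p]\!]}$ is a basis of $V_n$; since there are exactly $p=2n+1=\dim V_n$ of them, it suffices to prove linear independence. Once this is established, for the given $u$ I can solve the square linear system $\sum_l\lambda_l\Phi_l=T_n u$ uniquely for $(\lambda_l)$, set $u_a=\sum_l\lambda_l\varphi_l\in Span\ \mathcal E(G,N,p,q)$, and write $u-u_a=(u-T_n u)-\sum_l\lambda_l(\varphi_l-\Phi_l)$. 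Both groups on the right are Taylor remainders of order $n+1$, so the two desired estimates follow, the constant $C_{N,\Omega}$ absorbing the norm of the inverse system matrix (which bounds $\|\lambda\|$ by the degree-$\le n$ Taylor coefficients of $u$, hence by $\|u\|_{\mathcal C^{n+1}}$) together with the supremum over $\Omega$ of the $(n+1)$-st derivatives of the $\varphi_l$.

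The main obstacle is precisely the linear independence of $\{\Phi_l\}$, i.e.\ the invertibility of the associated matrix. Here the structure $(\luoz^l,\luzo^l)=N(\cos\theta_l,\sin\theta_l)$ with $N\neq0$ and equi-spaced $\theta_l$ is essential: at leading order each $\varphi_l$ coincides with a classical, possibly complex-wavenumber, plane wave of direction $\theta_l$, for which the reproduction matrix is of Vandermonde type and is nonsingular exactly when $N\neq0$ and the directions are distinct. I would then argue that the higher-degree terms of $P_l$ prescribed by \eqref{eq:TExy} and Definitions \ref{def:AN}--\ref{def:CN}, as well as the variable coefficient $\beta$ entering the recurrence, only perturb this matrix so that its determinant stays nonzero. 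This is the step where the hypothesis $N\neq0$ is indispensable, and where the degeneracy of the first choice $N=\sqrt{\beta(G)}$ as $\beta(G)\to0$ must be cured by the constant choice $N=I$; it is also the only place requiring the detailed bookkeeping of the design, the rest being Taylor remainder estimates.
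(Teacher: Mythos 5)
Your architecture coincides with the paper's proof of Theorem \ref{th:u-ua}: match degree-$n$ Taylor polynomials at $G$, observe that both $T_nu$ and the truncations $\Phi_l$ lie in the $(2n+1)$-dimensional space cut out by the Taylor relations of the PDE (the subspace $K$ of step (ii) there; your count of $n(n-1)/2$ relations and $\dim V_n=2n+1$ is the correct one), reduce the existence of $u_a$ to a rank statement for the coefficient matrix, and bound the coefficients $\lambda_l$ by the inverse system matrix to produce $C_{N,\Omega}$. All of this is sound and is exactly steps (i)--(iii) of the paper, including the final Taylor--Lagrange argument for the gradient.

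The gap is in the one step you defer, which is also the only genuinely technical one. You propose to prove invertibility by saying that the higher-degree terms of $P_l$ ``only perturb'' the classical Vandermonde-type matrix ``so that its determinant stays nonzero.'' As a perturbation argument this fails: the corrections to the Taylor coefficients of $\varphi_l$ are built from $\beta$ and its derivatives at $G$ via the recurrence \eqref{eq:IF}, and these carry no smallness, so no continuity argument can guarantee that the determinant survives. What actually works is structural, not metric. Proposition \ref{prop:derphi} gives $\dx^i\dy^j\varphi_l(G)=(\lambda_{1,0}^l)^i(\lambda_{0,1}^l)^j+R_{i,j}(\lambda_{1,0}^l,\lambda_{0,1}^l)$ with ${\rm d}R_{i,j}\le i-2$ and with coefficients of $R_{i,j}$ \emph{independent of} $l$; consequently every entry of $M_n$ equals the corresponding entry of the classical matrix $M_n^C$ plus an $l$-independent linear combination of entries lying strictly higher in the same column, i.e. $M_n=L_n\cdot M_n^C$ with $L_n$ unipotent lower triangular (Lemma \ref{lem:MnMnC}). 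The rank $2n+1$ is then exact, not approximate, and is inherited from the Vandermonde computation for $M_n^C$ in Lemma \ref{lem:rkMnC}. This factorization is precisely where the degree bound of Lemma \ref{lem:deglu} and the design choice $\lambda_{i,j}=0$ for $i\in\{0,1\}$, $i+j>1$ are used; without it your plan does not close.
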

The behavior of the constant $C_{N,\Omega}$ as $N$ goes to zero is commented in Subsection \ref{ssec:estCN}. 
It suggests the need for a parameter $N$ that is bounded away from zero, see Subsections \ref{ssec:ff} and \ref{ssec:sf}.

There are two main streams in proving such interpolation results for Helmholtz equation that have been developed in the literature. One of them is based on Vekua theory, which was first translated into English in \cite{hen} for functions in $\mathbb R^2$. A more recent introduction to the topic can be found in \cite{BetT}. Theoretical studies based on this technical tool can be found in \cite{melenk2}, and more recently in \cite{Moiola}. In the latter, the case of Helmholtz equation with constant coefficient is explicitly studied and interpolation properties are obtained with explicit dependence with respect to the parameters. However, even if this theory is powerful, in the case of a smooth coefficient it gives no explicit estimates with respect to the different parameters. On the other hand, another method using Taylor expansions was proposed in \cite{cd98}. Since the design of solutions developed in this paper is based on Taylor expansions as well, this second method will be used here.

Section \ref{sec:des} describes precisely the design process, and develops some properties of the resulting approximated functions. It defines two different ways of defining the new functions, called normalizations. Section \ref{sec:interp} focuses on the proof of Theorem \ref{th:u-ua}, considering these two normalizations as well. A last section presents a numerical application with a method based on the generalized plane wave basis functions and some numerical results. The numerical test cases are chosen to consider problems linked with reflectometry, a radar diagnostic technique for fusion plasma, see \cite{LMIGBD} for more details.


{\bf Notation.} The symbol $\partial_z$ represents the partial derivative with respect to the variable $z$. The symbol $I$ represents the complex number $I=\sqrt {-1}$ to avoid any confusion with the summation index $i$.

\section{Design and properties of a shape function}\label{sec:des}
This section concerns the design of shape functions that are locally approximated solutions of the scalar wave equation \eqref{eq:Helm}. 
The point $G=(x_G,y_G)\in\Omega$ is fixed, and the design process, based on Taylor expansions, depends on that point $G$, the degree of freedom $N$, the number $p$ of shape functions and the order of approximation $q$. 
This order of approximation will satisfy $q\geq 1$. The case $q=1$ corresponds to the simplest generalization of plane waves described previously in \eqref{eq:simplGPW}. The design of the polynomial $P$ starts with the choice of its degree, and then focuses on computing its coefficients to satisfy \eqref{-lapl+al}.

Some properties of two different types of shape functions follow. They are meant to be used in the proof of Theorem \ref{th:u-ua}.

\subsection{Design procedure}\label{ssec:DP}

\begin{definition}\label{def:PL} Denote by $P$ a bivariate polynomial.
The polynomial $(\Delta e^P)/e^P$ will be denoted $P_\Delta$, so that
$$P_\Delta = \left(\ddx P + (\dx P)^2 +\ddy P + (\dy P)^2\right) $$
\end{definition}
In order to satisfy the local approximation \eqref{-lapl+al}, the design is based on a non linear system on the coefficients of $P$ that arises from considering the Taylor expansions in scalar wave equation of $\beta-P_\Delta$ up to the order $q$. Thanks to Definition \ref{def:PL} it reads 
\e{\label{eq:a-PL}
\beta(x,y)-P_\Delta(x,y)=O(\|(x,y)-(x_G,y_G)\|^q).
}

The procedure includes choosing the degree of the polynomial and giving an explicit expression to compute the coefficients of the polynomial. These two choices are not independent. A precise analysis of equation \eqref{eq:a-PL} leads to choosing the degree of $P$ such that the computation of the coefficients appears to be straightforward. 
\begin{remark}
Since the constant coefficient $\lambda_{0,0}$ does not appear in \eqref{eq:a-PL}, it is set to zero. This   will simplify all the upcoming computations. Moreover, the fact that it does not depend either on $G$ or on $\beta$ prevents any blow up of the corresponding shape function since then $\varphi(G) = e^{\lambda_{0,0}}$ is constant.
\end{remark}

The system to be solved to ensure that equation \eqref{eq:a-PL} holds has:
\begin{itemize}
\item $N_{un} = \frac{({\rm d}P+1)({\rm d}P+2)}{2}-1$ unknowns, namely the coefficients of $P$ except $\lambda_{0,0}$,
\item $N_{eq} = \frac{q(q+1)}{2}$ equations, corresponding to the cancellation of the terms of degree lower than $q$ in the Taylor expansion of $\beta-P_\Delta$.
\end{itemize}
As a result the system is overdetermined if ${\rm d}P< q$, and in such a case the existence of a solution is not guaranteed. The idea is then to find the smallest value of ${\rm d}P\geq q$ that would provide an invertible system. 

The case ${\rm d}P = q$ is more intricate than the next one, since the $q$ equation stemming from the terms of degree $q-1$ have no linear term. It does not - in general - lead to a convenient invertible system. Indeed, in a such case, the system is underdetermined however there is no straightforward way to obtain an invertible system, because of the nonlinearity.

As for the case ${\rm d}P = q+1$, the system is underdetermined and the number of additional equations to be imposed to get a square system is $N_{un}-N_{eq}=2q+2$. Moreover, since
\enn{
\beta (x,y) = \sum_{ (i,j) \slash 0\leq i+j \leq q-1 }
 \frac{ \dx^i\dy^j\beta (x_G, y_G)}{i!j!} (x-x_G)^i (y-y_G)^j +  O\left( \| (x,y)-(x_G,y_G) \|^q \right),
}
then the $N_{eq}$ equations of the system that come from \eqref{eq:a-PL} actually reads 
\e{\label{eq:TExy}
\tab{l}{
\forall (i,j) \text{ s.t. } 0\leq i+j \leq q-1, \\
\displaystyle  \frac{\dux^i \duy^j \beta(G)}{i!j!} 
= (i+2)(i+1)\lu{i+2}{j}+ (j+2)(j+1) \lu{i}{j+2} 
\\ \displaystyle \phantom{\frac{\dux^i\beta(G) \duy^j \beta(G)}{i!j!} = } 
+\sum_{k=0}^{i} \sum_{l=0}^{j} (i-k+1)(k+1) \lu{i-k+1}{j-l} \lu{k+1}{l}
\\ \displaystyle \phantom{\frac{\dux^i\beta(G) \duy^j \beta(G)}{i!j!}  = } 
+\sum_{k=0}^{j} \sum_{l=0}^{i} (j-k+1)(k+1) \lu{i-l}{j-k+1} \lu{l}{k+1}.
}}
As a consequence, to obtain an invertible system the choice proposed in this paper is to fix the set of coefficients $\left\{ \lu{i}{j}, i\in \{0,1\}, j \in [\![ 0, q+1-i ]\!] \right\}$. Thus this choice corresponds to the $2q+3$ additional constraints that, together with equations \eqref{eq:TExy}, form a square system. 

\begin{proposition}\label{lem:U}
The system described by \eqref{eq:TExy} together with the additional constraints of fixing the elements of $\left\{ \lu{i}{j}, i\in \{0,1\}, j \in [\![ 0, q+1-i ]\!] \right\}$ has a unique solution, given by
\e{\label{eq:IF}
\tab{l}{
\forall (i,j) \text{ s.t. } 0\leq i+j \leq q-1, \\
\displaystyle \lu{i+2}{j} 
= \frac1{(i+2)(i+1)}\Bigg(\frac{\dux^i \duy^j \beta(G)}{i!j!} -  (j+2)(j+1) \lu{i}{j+2} 
\\ \displaystyle \phantom{\frac{\dux^i\beta(G) \duy^j \beta(G)}{i!j!} = } 
-\sum_{k=0}^{i} \sum_{l=0}^{j} (i-k+1)(k+1) \lu{i-k+1}{j-l} \lu{k+1}{l}
\\ \displaystyle \phantom{\frac{\dux^i\beta(G) \duy^j \beta(G)}{i!j!}  = } 
-\sum_{k=0}^{j} \sum_{l=0}^{i} (j-k+1)(k+1) \lu{i-l}{j-k+1} \lu{l}{k+1}\Bigg).
}}
\end{proposition}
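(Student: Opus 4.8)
The plan is to exploit the fact that formula \eqref{eq:IF} is nothing but equation \eqref{eq:TExy} solved for the single coefficient $\lu{i+2}{j}$, which is legitimate precisely because its prefactor $(i+2)(i+1)$ never vanishes. Hence \eqref{eq:TExy} and \eqref{eq:IF} are strictly equivalent, and the whole content of the proposition reduces to showing that \eqref{eq:IF}, read as a recursion, is well posed: each unknown is expressed solely in terms of the prescribed coefficients and of coefficients computed at earlier stages. Establishing this well-posedness yields existence (the recursion produces a solution) and uniqueness (any solution necessarily obeys the recursion, whose output is unique) simultaneously.

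First I would identify the unknowns and fix the order of resolution. The prescribed coefficients are those with first index $i\in\{0,1\}$, so the unknowns are exactly the $\lu{i'}{j'}$ with $i'\geq 2$ and $i'+j'\leq q+1$; the substitution $(i,j)\mapsto(i+2,j)$ is a bijection from the index set of \eqref{eq:TExy} onto this set of unknowns, confirming that the system is square. I would then order the resolution by increasing first index $r=i+2$, running from $r=2$ up to $r=q+1$, and prove by induction on $r$ that all coefficients with first index $r$ are uniquely determined.

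The crux, and the one computation worth carrying out carefully, is to check that every coefficient occurring on the right-hand side of \eqref{eq:IF} has first index strictly smaller than $i+2$. For the linear term $\lu{i}{j+2}$ this is immediate. For the two quadratic double sums one inspects the ranges: in $\lu{i-k+1}{j-l}\lu{k+1}{l}$ with $0\leq k\leq i$ both first indices lie in $[\![1,i+1]\!]$, and in $\lu{i-l}{j-k+1}\lu{l}{k+1}$ with $0\leq l\leq i$ both first indices lie in $[\![0,i]\!]$; in every case the first index is at most $i+1=(i+2)-1$. One also verifies that these indices stay admissible (nonnegative, and total degree at most $q+1$), so that each right-hand side coefficient is either prescribed (first index $0$ or $1$) or already computed (first index between $2$ and $i+1$). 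This is exactly the triangular structure with respect to the first index that makes the recursion run, and the nonlinearity is harmless because the quadratic terms only ever couple coefficients of strictly lower first index to the one being solved for.

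With this structural fact in hand the induction is routine: at the base $r=2$ (that is $i=0$) the right-hand side of \eqref{eq:IF} involves only coefficients of first index $0$ and $1$, all prescribed, so each $\lu{2}{j}$ is determined; at a general step the right-hand side involves only prescribed coefficients and coefficients of first index at most $r-1$, all available by the induction hypothesis, so each $\lu{r}{j}$ is determined. Hence every unknown is computed, uniquely, and \eqref{eq:IF} is the solution. I expect no genuine obstacle beyond the bookkeeping of the index ranges in the two sums.
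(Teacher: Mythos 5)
Your proposal is correct and follows essentially the same route as the paper: the paper's proof simply asserts that existence and uniqueness "stem directly from the induction relation \eqref{eq:IF}" and points to a figure illustrating precisely the triangularity you verify, namely that every coefficient on the right-hand side has first index at most $i+1$, so the recursion in increasing first index is well posed. Your version just makes the index bookkeeping explicit.
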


\begin{proof}
For any given set of coefficients $\left\{ \lu{i}{j}, i\in \{0,1\}, j \in [\![ 0, q+1-i ]\!] \right\}$, the existence and uniqueness of a solution of \eqref{eq:TExy} stems directly from the induction relation \eqref{eq:IF}.
 See Figure \ref{fig:indl}.
\begin{figure}
\begin{center}
 \includegraphics[width=0.45\textwidth]{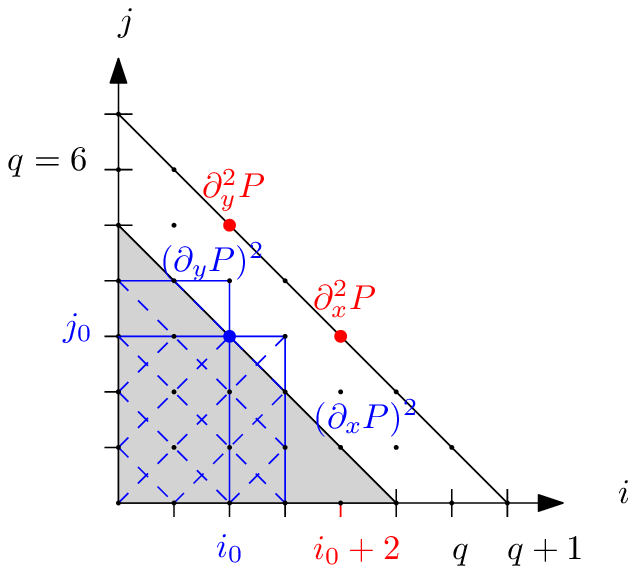}
 \includegraphics[width=0.45\textwidth]{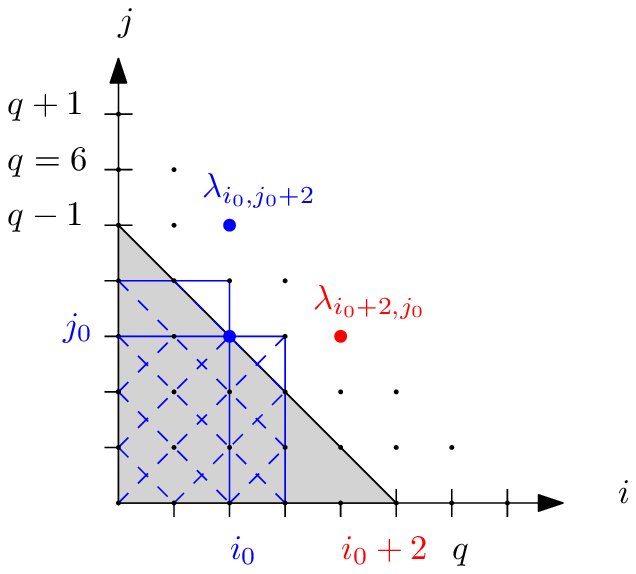}
 \caption{
For a given $(i_0,j_0)$, the left part of the figure shows the contributions from $P_\Delta$ to the $x^{i_0}y^{j_0}$ term in $\beta-P_\Delta$. The right part shows that $\lu{i_0+2}{j_0}$ can be explicitly expressed as long as $\lu{k}{l}$ are known for all $k\leq i_0+1$ and $l\leq {\rm d}P-2-k$.}
\label{fig:indl}
\end{center}
\end{figure}
\end{proof}

In this paper the set of coefficients $\left\{ \lu{i}{j}, i\in \{0,1\}, j \in [\![ 0, q+1-i ]\!] \right\}$ will be fixed in the following way.
\begin{definition}\label{def:GPW}
Denote by $q\in\mathbb N^*$ the approximation order, by $\theta \in \mathbb R$ and $N\in\mathbb C$ such that $N\neq 0$. A generalized plane wave is a function $\varphi=e^P$, with $\displaystyle P=\sum_{(i,j)\backslash 0\leq i+j\leq q+1} \luij (x-x_G)^i (y-y_G)^j$ which coefficients satisfy
\begin{itemize}
\item $(\luoz,\luzo)=N (\cos \theta,\sin\theta)$ as described in the introduction,
\item $\lambda_{0,0}=0$ to avoid any blow up of the shape function linked to the exponential,
\item $ \lu{i}{j}=0$ for $i\in\{0,1\}$ and $1<i+j\leq q+1$.
\end{itemize}
 and the induction formula \eqref{eq:IF}.
\end{definition}
The last item is the simplest possible choice and is meant to simplify both the numerical computations - by a substantial decrease of basic operations necessary to evaluate a shape function - and the analysis of the method.

\begin{remark}[Other possible choices]
Other choices to obtain an invertible system would give the same theoretical results. For instance choosing to fix  $\{ \lu{i}{j},j\in\{0,1\},i\in [\![ 0, q+1-j ]\!]\}$ is possible as well. But numerically, as will be seen later on, there is no evidence of the lack of symmetry with respect to the two space variables.
\end{remark}

\subsection{A fundamental property of a generalized pane wave}
Since the design and the interpolation study are based on different Taylor expansions, the derivatives of the shape function $\varphi$ are important quantities.
Both
\begin{enumerate}
\item[$\bullet$] the coefficients $\luij$s defining a shape function $\varphi$ 
\item[$\bullet$] the derivatives of $\varphi$ 
\end{enumerate} 
are here expressed as polynomials with two variables with respect to $(\luoz,\luzo)$.
The following Lemma \ref{lem:deglu} and Proposition \ref{prop:derphi} give a description of these quantities with respect to the only non zero coefficients fixed as constraints, namely $(\luoz,\luzo)$.

\begin{lemma}\label{lem:deglu}
For a given set of coefficients $\left\{ \lu{i}{j}, i\in \{0,1\}, j \in [\![ 0, q+1-i ]\!] \right\}$, the set of  coefficients $\{\lu{i}{j}, 0\leq i+j\leq q+1\}$ that are the unique solution of \eqref{eq:TExy} from Lemma \ref{lem:U} can be described as polynomials with two variables in $(\luoz,\luzo)$ as follows.
\begin{equation}\label{tab:lus}
\left\{
\begin{array}{l}
\forall i\geq 2
\\ \lu{i}{j}  \textrm{ is of total degree at most } i-2 . 
\end{array}
\right.
\end{equation}
\end{lemma}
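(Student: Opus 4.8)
The plan is to read the degree bound straight off the induction formula \eqref{eq:IF}. Polynomiality in $(\luoz,\luzo)$ is immediate: by Definition \ref{def:GPW} the only nonzero fixed coefficients are $\luoz$ and $\luzo$ themselves, and \eqref{eq:IF} produces every remaining $\lu{i}{j}$ from a single $\beta$-derivative constant together with sums and products of previously computed coefficients. Hence each $\lu{i}{j}$ is a genuine polynomial in the two indeterminates $(\luoz,\luzo)$, the derivatives of $\beta$ at $G$ entering only as scalar coefficients. For the degree estimate I would argue by strong induction on the first index. The key structural observation is that when \eqref{eq:IF} is solved for $\lu{i+2}{j}$, the right-hand side involves only coefficients of first index at most $i+1$: the shift term $\lu{i}{j+2}$ has first index $i$, while in the two double sums the first indices of the two factors add up to $i+2$ and to $i$ respectively, so each individual factor has first index strictly less than $i+2$. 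Thus, to prove $\deg\lu{I}{J}\le I-2$ for all admissible $J$, I may assume the same bound for every coefficient of smaller first index; the base data are the fixed coefficients of first index $0$ and $1$, all zero except $\luoz$ and $\luzo$, which have degree $1$.

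In the inductive step for $\lu{I}{J}$ (set $i=I-2$) I would bound the four contributions separately. The $\beta$-derivative term has degree $0\le I-2$. The shift term $\lu{I-2}{J+2}$ has degree $\le I-4$ by the induction hypothesis, and in fact vanishes for $I\in\{2,3\}$, so in all cases its degree is $\le I-2$. In the first double sum the two factors have first indices summing to $I$, each between $1$ and $I-1$; for $I\ge 3$ they cannot both equal $1$, so at most one factor is a degree-one generator while the other has first index $\ge 2$ and degree $\le I-3$ by the hypothesis, giving a product of degree at most $1+(I-3)=I-2$. The second double sum, whose factors have first indices summing to $I-2$, is handled identically and only produces degree $\le I-3$. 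Collecting these, $\deg\lu{I}{J}\le I-2$ for every $I\ge 3$.

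The one case that resists this bookkeeping, and which I expect to be the heart of the argument, is $I=2$: there the two factors in the first sum are forced to have first index $1$ and those in the second sum first index $0$, so the naive degree is $2$ rather than the claimed $0$. The resolution is that these quadratic terms assemble into a scalar multiple of $\luoz^2+\luzo^2$, which by the defining constraint $(\luoz,\luzo)=N(\cos\theta,\sin\theta)$ equals the constant $N^2$. This collapse of the top-degree part to a scalar is precisely what lowers $\deg\lu{2}{j}$ to $0$ and, propagated through the induction, produces the $-2$ in the bound; it is also the only place where the specific position of $(\luoz,\luzo)$ on the circle of radius $N$ is used. I would therefore isolate this identity first and then run the induction above.
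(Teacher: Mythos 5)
Your proof is correct and takes essentially the same route as the paper's: a strong induction on the first index driven by the recursion \eqref{eq:IF}, with term-by-term degree bookkeeping that exploits the vanishing of the fixed coefficients $\lu{i}{j}$ for $i\in\{0,1\}$, $i+j>1$, and with the collapse of $(\luoz)^2+(\luzo)^2$ to the constant $N^2$ supplying the base case $i=2$ exactly as in the paper. The only difference is organizational (the paper treats $i=3$ as a second explicit base case before running the induction, while your uniform step absorbs it), so no further comment is needed.
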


The following proof relies on a close examination of the induction formula \eqref{eq:TExy}, considered as polynomial with two variables, namely $(\luoz,\luzo)$. The idea is to track the terms with higher degree.

\begin{proof}
Because of the null coefficients, formula \eqref{eq:TExy} for $i=0$ and $i=1$ reads
\syst{rll}{\label{sys:lus}
 \beta(G) &= 2\lu{2}{0} + (\luoz)^2+(\luzo)^2,&\\
\displaystyle \frac{\dy^j \beta(G)}{j!} &\displaystyle  = 2\lu{2}{j} & \forall j>0,\\
\displaystyle \dx \beta(G)  &= 6 \lu{3}{0}+4\lu{2}{0} \luoz, &\\
\displaystyle \frac{\dx \dy^j \beta(G)}{j!} &= 6 \lu{3}{j}+4\lu{2}{j} \luoz & \forall j>0.
}
Then \eqref{tab:lus} for $i=2$ stems from point 
\ref{norm:1} of the normalization. Indeed for $j=0$ the sum $(\luoz)^2+(\luzo)^2$ does not depend on $(\luoz,\luzo)$ themselves but only on $N$. 
Afterwards \eqref{tab:lus} for $i=3$ is clear from \eqref{sys:lus}.

Now set $i\geq2$ and suppose that the statement \eqref{tab:lus} holds true for all $ \tilde i \in [\![ 3,i+1 ]\!]$. Then, isolating $\lu{i+2}{j}$ in \eqref{eq:TExy}, the highest possible degree of each term is 
\begin{itemize}
\item[$\bullet$] ${i-2}$ for the term in $\lu{i}{j+2}$,
\item[$\bullet$] $ (i-1)+1$ for the term in $\lu{i+1}{j}\luoz$,
\item[$\bullet$] $(i-k-1)+(k-1)$ for the terms in $\lu{i-k+1}{j-l}\lu{k+1}{l}$ with $k\neq0$ and $k\neq i$,
\item[$\bullet$] $(i-2)+1$ for the term in $\lu{i}{j+1}\luzo$,
\item[$\bullet$] $(i-l-2)+(l-2)$ for the term in $\lu{i-l}{j-k+1}\lu{l}{k+1}$ with $l\neq0$ and $l\neq i$, 

note that $\lu{i-l}{j-k+1}\lu{l}{k+1}=0$ with $l\neq 1$ and $l\neq i-1$ because of the point 
\ref{norm:2} of the normalization.
\end{itemize}
As a consequence the terms with higher degree appearing in the expression of $\lu{i+2}{j}$ have degree at most equal to $i$. It completes the proof of \eqref{tab:lus} for $i>2$ by induction.
\end{proof}

\begin{proposition}\label{prop:derphi}
Suppose $\theta \in \mathbb R$ and $N\in\mathbb C$ is such that $N\neq 0$. Consider a shape function $\varphi = e^{P}$ constructed in Subsection \ref{ssec:DP}. Then for all $(i,j) \in \mathbb N^2$ such that $i+j\leq q+1$ there is a complex polynomial $R_{i,j}$ such that its total degree satisfies ${\rm d} R_{i,j} \leq i-2$ and such that 
\e{\label{eq:derphi}
 \dx^i\dy^j \varphi (G)
 = (\luzo)^j (\luoz)^i+R_{i,j}(\luoz,\luzo).
}
The coefficients of $R_{i,j}$ only depend on $N$ and on the derivatives of $\beta$.
\end{proposition}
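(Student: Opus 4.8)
The plan is to read $\dx^i\dy^j\varphi(G)$ off the Taylor coefficients of the exponential series and then control degrees by means of Lemma \ref{lem:deglu}. Since $\lambda_{0,0}=0$, the polynomial $P$ has no constant term, so $\varphi(G)=e^{P(G)}=1$ and, writing $X=x-x_G$ and $Y=y-y_G$, the value $\dx^i\dy^j\varphi(G)$ is exactly $i!\,j!$ times the coefficient of $X^iY^j$ in the formal expansion $e^P=\sum_{n\ge0}P^n/n!$. I would therefore expand each power $P^n$ as a product of monomials of $P$ and collect the products whose exponents sum to $(i,j)$.

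First I would record which monomials of $P$ can occur. By Definition \ref{def:GPW} the only nonzero coefficients are $\luoz$ (monomial $X$), $\luzo$ (monomial $Y$), and the $\lu{a}{b}$ with $a\ge2$; indeed $\lu{0}{b}=0$ for $b\ge2$, $\lu{1}{b}=0$ for $b\ge1$, and $\lambda_{0,0}=0$. Hence every monomial of degree at least two carries an $x$-exponent $a\ge2$. There is a single product reaching $X^iY^j$ that uses only the two linear monomials, namely $i$ copies of $X$ and $j$ copies of $Y$: it lives inside $P^{i+j}/(i+j)!$ and contributes $\frac1{(i+j)!}\binom{i+j}{i}(\luoz)^i(\luzo)^j$, which after multiplication by $i!\,j!$ is precisely $(\luzo)^j(\luoz)^i$. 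I would then define $R_{i,j}$ as the sum of all remaining contributions, that is, the products that use at least one monomial $X^aY^b$ with $a\ge2$.

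The heart of the proof is the degree estimate for $R_{i,j}$. Using Lemma \ref{lem:deglu}, each such heavy factor $\lu{a}{b}$ has degree at most $a-2$, a factor $\luoz$ has degree $1$, and — this is the point that yields the bound $i-2$ rather than $i+j-2$ — the relevant degree is the one of Lemma \ref{lem:deglu}, driven by the $x$-exponent, so the linear factors $\luzo$ do not raise it. For a product reaching $X^iY^j$ that uses $r\ge1$ heavy factors with $x$-exponents $a_k\ge2$, the $x$-exponents of its $X$-factors and heavy factors sum to $i$, so its degree is at most $(\text{number of } X\text{-factors})+\sum_k(a_k-2)=i-2r\le i-2$. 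Summing over all such products gives ${\rm d}R_{i,j}\le i-2$. Finally, since by Proposition \ref{lem:U} and Lemma \ref{lem:deglu} every $\lu{a}{b}$ is a polynomial in $(\luoz,\luzo)$ whose coefficients are assembled from $N$, through the reduction $(\luoz)^2+(\luzo)^2=N^2$, and from the derivatives of $\beta$ at $G$, the coefficients of $R_{i,j}$ depend only on $N$ and on the derivatives of $\beta$.

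I expect the delicate point to be exactly this degree bookkeeping: one must use the asymmetric, $x$-index-driven degree of Lemma \ref{lem:deglu}, together with the reduction $(\luoz)^2+(\luzo)^2=N^2$ already exploited there, to see that each heavy monomial is degree-deficient by two and hence that every non-leading product loses at least two in degree; a secondary check is that the multinomial weight of the leading product cancels against $i!\,j!$ to leave coefficient $1$. An equivalent and perhaps cleaner route would be an induction on $i+j$ based on $\dx\varphi=(\dx P)\varphi$ and $\dy\varphi=(\dy P)\varphi$: in the Leibniz expansion of $\dx^{i}\dy^{j}(\dx P\cdot\varphi)$ the term on which no derivative falls on $\dx P$ reproduces the leading monomial $(\luoz)^{i+1}(\luzo)^j$, while every other term multiplies a $\lu{k+1}{l}$ with $k+1\ge2$ (hence of degree at most $k-1$) by a lower-order derivative of $\varphi$, so it lowers the degree, which propagates the claim.
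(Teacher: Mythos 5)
Your overall strategy coincides with the paper's: expanding $e^P=\sum_n P^n/n!$ and collecting the products of monomials of $P$ whose exponents sum to $(i,j)$ is exactly the bivariate Fa\`a di Bruno formula the paper applies; both arguments identify the leading term with the unique all-linear partition $j(0,1)+i(1,0)$ (your multinomial computation of its coefficient is correct); both observe that every other nonzero monomial of $P$ has $x$-exponent $a\geq 2$; and both then invoke Lemma \ref{lem:deglu}. The difference, and the gap, is in the degree bookkeeping. You bound the total degree of a product containing $r\geq 1$ heavy factors by $(\text{number of }X\text{-factors})+\sum_k(a_k-2)=i-2r$, on the grounds that the degree is ``driven by the $x$-exponent, so the linear factors $\luzo$ do not raise it.'' Lemma \ref{lem:deglu} does not support this: it bounds the \emph{total} degree of $\lu{a}{b}$ (for $a\geq 2$) as a polynomial in the two variables $(\luoz,\luzo)$, and under that convention $\luzo$ is a monomial of total degree $1$, exactly like $\luoz$. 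Counting the $Y$-factors honestly adds $j-\sum_k b_k$ to your tally, and the bound becomes $i+j-2r\leq i+j-2$, not $i-2$.

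This is not merely a sloppy estimate that a sharper count would fix. For instance
\begin{equation*}
\dx^2\dy\varphi(G)=(\luoz)^2\luzo+2\lu{2}{0}\luzo+2\lu{2}{1}
=(\luoz)^2\luzo+\bigl(\beta(G)-N^2\bigr)\luzo+\dy\beta(G),
\end{equation*}
so $R_{2,1}$ contains a term of total degree $1$ whenever $\beta(G)\neq N^2$, while $i-2=0$; and the relation $(\luoz)^2+(\luzo)^2=N^2$ cannot reduce a term that is linear in $\luzo$. You should also know that the paper's own proof does not achieve more than your corrected count: its displayed conclusion is that every non-leading partition has degree at most $\sum_l k_l(i_l+j_l)-2=i+j-2$, i.e.\ the $j$-dependent bound, not the bound ${\rm d}R_{i,j}\leq i-2$ announced in the statement. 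So your instinct that some extra mechanism is needed to eliminate the $j$-dependence was right, but the mechanism you invoke is not available. The safe conclusion from either argument is ${\rm d}R_{i,j}\leq i+j-2$, which is in fact all that is used later: Lemma \ref{lem:MnMnC} only needs $R_{i,j}(\dx e_l(G),\dy e_l(G))$ to be a combination of derivatives of $e_l$ of total order strictly less than $i+j$, i.e.\ of entries lying in strictly higher rows of $M_n^C$. Your alternative inductive route via $\dx\varphi=(\dx P)\varphi$ suffers from the same issue (the Leibniz terms with $k\geq 1$ and $l=0$ only lose one unit of order, not two, unless one again tracks total degree carefully), so it is not a repair.
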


\begin{remark}\label{rem:NU}
Since $(\luoz)^2+(\luzo)^2$ is fixed, none of the polynomial expressions that are at stake can be unique. 
For instance, any occurrence of $(\luoz)^2$ could be replaced by $N^2-(\luzo)^2$ which would change the term of higher degree. 
This is the reason why $R_{i,j}$ is not unique: see Subsection \ref{ssec:alg} for a different point of view. However, formula \eqref{eq:IF} from Proposition \ref{lem:U} gives an explicit procedure for the computation of all $\luij$s: this is the crucial point that will be used for practical implementation.
\end{remark}

One could have expected the degree of $R_{i,j}$ to be smaller than $i+j-1$. The fact that it does actually not depend on $j$ is due to the choice of $\{ \lu{i}{j},i\in\{0,1\},i+j> 1\}$ to be zero. The fact that it is smaller than $i-2$ is due to the fact that the degree of $\lu{2}{j}$ is $0$, since $(\luoz)^2+(\luzo)^2=N^2$ is constant with respect to $\luzo$ and $\luoz$. See Definition \ref{def:GPW}.

\begin{proof}
Applying the chain rule introduced Appendix \ref{app:bivFDB} to $\varphi=e^{P}$ one gets for all $ (i,j) \in \mathbb N^2$,
\enn{
\dx^i\dy^j \varphi (G)
 = i!j!\sum_{\mu=1}^{i+j} \sum_{s=1}^{i+j} \sum_{p_s((i,j),\mu)} \prod_{l=1}^s \frac{(\lu{i_l}{j_l})^{k_l}}{k_l!},
}
where $p_s((i,j),\mu)$ is the set of partitions of $(i,j)$ with length $\mu$:
\enn{
 \left\{ (k_l,(i_l,j_l))_{l\in [\![ 1,s ]\!]}:k_l\in\mathbb N^*, 0\prec (i_1,j_1)\prec \dots \prec(i_l,j_l), \sum_{l=1}^s k_l = \mu,  \sum_{l=1}^s k_l(i_l,j_l) = (i,j) \right\}.
}
Now consider such a partition to be given and focus on the degree of the corresponding product term, namely $\displaystyle \prod_{l=1}^s (\lu{i_l}{j_l})^{k_l}$. Thanks to Lemma \ref{lem:deglu} one can split this product into different terms regarding their degree as polynomials with respect to $(\luoz\luzo)$. As a result, since $\displaystyle Deg \ \prod_{l=1}^s (\lu{i_l}{j_l})^{k_l}=\sum_{l=1}^s k_l Deg\ \lu{i_l}{j_l}$, this quantity is also at most equal to
\e{\label{eq:sumdeg}
\sum_{i_l=0,j_l=1} k_l j_l+\sum_{i_l=1,j_l=0} k_l i_l+\sum_{i_l=2} k_l\cdot 0+\sum_{i_l\geq3} k_l (i_l-2) ,
}
where the two first sums contain at most one term each. 

Obviously the leading term in $\dx^i\dy^j \varphi (G)$ is $(\luzo)^j(\luoz)^i$, it corresponds to the partition $(i,j)=j(0,1)+i(1,0)$. Indeed, 
as long as a partition contains at least one term such that $i_l\geq 2$, the resulting degree computed from \eqref{eq:sumdeg} will contain at least one term $k_l\cdot0 $ or $k_l(i_l-2)$, and any of them is at most $k_l(i_l+j_l)-2$; as a consequence the degree computed in \eqref{eq:sumdeg} is then strictly lower than $\displaystyle \sum_{l=1}^s k_l(i_l+j_l)-2= i+j-2$.

Since the product term corresponding to the partition $j(0,1)+i(1,0)$ is $(\luzo)^j(\luoz)^i/(j!i!)$ it completes the proof.
\end{proof}

\subsection{A more algebraic viewpoint}\label{ssec:alg}
This paragraph presents a more algebraic point of view on 
Remark \ref{rem:NU}.

Suppose $N\in\mathbb C$ is such that $N\neq 0$. The value of $(\luoz,\luzo)$ gives that $P_{N}=(\luoz)^2+(\luzo)^2-{N}^2$ satisfies $P_{N}=0$ for the $p$ different functions of $\mathcal E(G,N,p,q)$. From then on, considering other quantities as polynomials with two variables in $(\luzo,\luoz)$ is in fact computing in the quotient ring $\mathbb C [\luoz,\luzo] /(P_{N})$ of $\mathbb C [\luoz,\luzo]$ modulo the ideal generated by $P_{N}$. For instance, the system \eqref{sys:lus} reads 
\systnn{rll}{
\lu{2}{0} &\displaystyle =\frac{\beta(G)-{N}^2}{2}&  (P_{N}), \\
\displaystyle \lu{2}{j} &\displaystyle  =  \frac{\dy^j \beta(G)}{2(j!)}&  (P_{N}),\ \forall j>0,\\
\displaystyle \lu{3}{0}&\displaystyle  =  \frac{\dx \beta(G) -2\luoz(\beta(G)-{N}^2) }{6} &(P_{N}),\\
\displaystyle \lu{3}{j}&\displaystyle  =  \frac{\dx \dy^j \beta(G) }{6(j!)}+2  \frac{\dy^j \beta(G)}{j!} \luoz &  (P_{N}),\ \forall j>0.
}
Of course in this quotient ring, each equivalence class has an infinite number of elements, and all the computations of the previous subsection are performed on elements of these classes. 
Thus any equality applies to all the elements of the same class. Note that since the ring considered here is the ring of polynomials with two variables, there is no such thing as the Euclidean division. 
As a result there is nothing like a canonical element of a class used for computations. One can easily see that for $q\geq 4$
\enn{\tab{rl}{
\dx^4\dy \varphi (G)& =  (\luoz)^4 (\luoz) +2\dy \beta(G)\left( (\luoz)^2 - (\luzo)^2\right)+2\dx\beta(G) \luzo\luoz+ 2\dx\dy\beta(G)\luoz\\ & \quad+(-3\dy^2\beta(G)+\dx \beta(G))\luzo  -\dy^3\beta(G) + \dx^2\dy\beta(G), \\
 &=  (\luoz)^4 (\luoz) +2\dy \beta(G)\left( (\luoz)^2 + (\luzo)^2\right)+2\dx\beta(G) \luzo\luoz+ 2\dx\dy\beta(G)\luoz\\ & \quad+(-3\dy^2\beta(G)+\dx \beta(G))\luzo  -\dy^3\beta(G) + \dx^2\dy\beta(G)-2\beta(G)\dy \beta(G),
}}
which gives two possible $R_{4,1}\in \mathbb C[\luoz,\luzo]$ satisfying \eqref{eq:derphi} in Proposition 
\ref{prop:derphi}.


\subsection{First family of generalized plane waves}
\label{ssec:ff}

The first type of shape functions corresponds to a {\it local} choice since it does depend on $G\in\mathbb R^2$.
\begin{definition}\label{def:AN}
The $\beta$-normalization is defined by choosing $N=\sqrt{\beta(G)}$ in Definition \ref{def:GPW}, which means setting
\begin{enumerate}
\item\label{norm:1}
$(\luoz,\luzo)=\sqrt{\beta(G)} (\cos \theta,\sin\theta)$. 
\item\label{norm:2}
$\{ \lu{i}{j},0\leq i+j\leq q+1,i+j\neq 1\}$ are set to zero.
\end{enumerate}
\end{definition}

\begin{remark}[Back to classical Plane Waves from the $\beta$-normalization]\label{rem:BtCPW}
The fact that the quantity $(\luoz)^2+(\luzo)^2$ is equal to $\beta(G)$ however gives that the value of $\beta(G)$ does actually never appear in the expression of the other coefficients explicitley, but only in product terms involving $\luoz$ or $\luzo$. One can easily check by induction that all the terms appearing in formula \eqref{eq:IF} are then linear combinations of the derivatives of $\beta$. 
As a consequence, for $\beta=-\omega^2<0$ and for any $q\geq 1$, all the coefficients $\luij$ such that $i>1$ are actually zero, which means that the corresponding function $\varphi=e^P$ is nothing more than a classical plane wave.

As remarked in the introduction, it is also obvious that for $q=1$ this new shape function is again nothing more than a classical plane wave as long as $\beta<0$. This case $q=1$ corresponds to the classical fact of approximating a smooth coefficient by its piecewise constant value at the center of the cells.
\end{remark}

\subsection{Second possibility}
\label{ssec:sf}
The fact is that since the terms $(\luoz,\luzo)$ of the $\beta$-normalization are proportional to the square root of $\beta$, they will tend to zero with $\beta$. A theoretical estimate displayed in Subseciton \ref{ssec:estCN} will justify the need for a second normalization. Moreover numerical results show that it causes severe damaging to the conditioning of the discrete UWVF problem if $\beta$ tends to zero. As a consequence, a second normalization is considered, with a {\it global} choice of $N$ independent from $G\in\mathbb R^2$. 
\begin{definition}\label{def:CN}
The constant-normalization is defined by choosing $N=i$ in Definition \ref{def:GPW}, which means setting 
\begin{enumerate}
\item
$(\luoz,\luzo)=i (\cos \theta,\sin\theta)$. 
\item
$\{ \lu{i}{j},0\leq i+j\leq q+1,i+j\neq 1\}$ are set to zero.
\end{enumerate}
It corresponds to $N=i$. 
\end{definition}

\begin{remark}[Classical Plane Waves and the constant-normalization]\label{rem:BtCPWCST}
In order to balance Remark \ref{rem:BtCPW}, note that since the constant-normalization does not depend on $\beta$ it arises that for $\beta=-\omega^2\neq -1$ the term $\beta(G)$ appears in higher order terms. For instance it is clear that
\enn{\lu{2}{0}=\frac{1+ \beta(G)}{2}.}
As a consequence, neither when $\beta(\neq -1)<0$ is constant nor when $q=1$ the shape function designed from the constant-normalization can be a classical plane wave.
\end{remark}

\section{Interpolation}
\label{sec:interp}

The interpolation properties of the set $\mathcal E (G,N,p,q)$ are defined in Theorem \ref{th:u-ua}. This section is devoted to the proof of this result, which states that, in order to approximate to a given order $n+1$ the solution of the scalar wave equation \eqref{eq:Helm} around a point $G$, a sufficient number $p=2n+1$ of basis functions together with a sufficient approximation parameter $q=n+1$ are required. The gradient of the solution is then approximated to the order $n$.

\subsection{Theoretical result}
This subsection focuses on the interpolation property of the set of basis functions $\mathcal E (G,N,p,q)$. 
The sketch of the proof is inspired by the one developed by Cessenat in \cite{cd98}, but it is adapted to the generalized plane wave basis functions. 
 Note that the application to the UWVF is postponed to Section \ref{apps}.

\begin{definition}\label{def:Mns}
Suppose that $N\in\mathbb C$ is such that $N\neq 0$, $n\in\mathbb N^*$ and $G\in\mathbb R^2$. For all $l\in \mathbb N$ such that $1\leq l\leq n$ consider the direction $\theta_l=2\pi(l-1)/(2n+1)$, the generalized plane wave $\varphi_l$, $\kappa=-iN\in\mathbb C^*$ and the function
$$e_l(x,y)=e^{I\kappa\left((x-x_G)\cos \theta_l + (y-y_G) \sin \theta_l\right)},$$ 
which is a classical plane wave if $N \in i\mathbb R$.
The $(n+1)(n+2)/2\times(2n+1)$ matrices $M_n^C$ and $M_n$ are defined as follows: for all $(k_1,k_2)\in\mathbb N^2$, such that $k_1+k_2\leq n$
\begin{equation*}
\left\{
\begin{array}{l}
\left(M_n^C \right)_{\frac{(k_1+k_2)(k_1+k_2+1)}{2}+k_2+1,l} = \frac{\dx^{k_1}\dy^{k_2}e_l (G)}{k_1 ! k_2!}, \\
 \left(M_n \right)_{\frac{(k_1+k_2)(k_1+k_2+1)}{2}+k_2+1,l} =\frac{\dx^{k_1}\dy^{k_2}\varphi_l (G)}{k_1 ! k_2!} .
\end{array}
\right.
\end{equation*}
Their $l$th columns contain respectively the Taylor expansion coefficients of the functions $e_l$ and $\varphi_l$.
\end{definition}
\noindent For instance, one has $
M_1=
\begin{pmatrix}
\varphi_1 (G) &\varphi_2(G) & \varphi_3 (G) \\
\dx\varphi_1 (G) &\dx\varphi_2(G) & \dx\varphi_3 (G) \\
\dy\varphi_1 (G) &\dy\varphi_2(G) &\dy \varphi_3 (G) 
\end{pmatrix}$, $M_1^C=\begin{pmatrix}
1&1&1\\
I\kappa\cos\theta_1&I\kappa\cos\theta_2 &I\kappa\cos\theta_3\\
I\kappa\sin\theta_1 & I\kappa\sin\theta_2 &I\kappa \sin\theta_3
\end{pmatrix}
$ and
\begin{equation*}
M_2=
\begin{pmatrix}
\varphi_1 (G) &\varphi_2(G) & \varphi_3 (G)&\varphi_4(G) & \varphi_5 (G)  \\
\dx\varphi_1 (G) &\dx\varphi_2(G) & \dx\varphi_3 (G) &\dx\varphi_4(G) & \dx\varphi_5 (G) \\
\dy\varphi_1 (G) &\dy\varphi_2(G) &\dy \varphi_3 (G)  &\dy\varphi_4(G) &\dy \varphi_5 (G) \\
\dx^2\varphi_1 (G)/2 &\dx^2\varphi_2(G)/2 & \dx^2\varphi_3 (G)/2 &\dx^2\varphi_4(G)/2 & \dx^2\varphi_5 (G)/2 \\
\dx\dy\varphi_1 (G) &\dx\dy\varphi_2(G) &\dx\dy \varphi_3 (G) &\dx\dy\varphi_4(G) &\dx\dy \varphi_5 (G) \\
\dy^2\varphi_1 (G)/2 &\dy^2\varphi_2(G)/2 &\dy^2 \varphi_3 (G)/2  &\dy^2\varphi_4(G)/2 &\dy^2 \varphi_5 (G)/2
\end{pmatrix}.
\end{equation*}

The rank of the matrix $M_n^C$ is computed in Lemma \ref{lem:rkMnC}, which profits from the fact that the result proved by Cessenat and Despr\'es in \cite{cd98} for $\kappa>0$ is actually still valid for $\kappa \in \mathbb C^*$. 
The proof of Theorem \ref{th:u-ua} relies on Lemma \ref{lem:MnMnC} that explicits the link between the matrix $M_n^C$ and the corresponding matrix $M_n$ built with the generalized plane waves. 

\begin{lemma}\label{lem:rkMnC}
Suppose that $N\in\mathbb C$ is such that $N\neq 0$, $n\in\mathbb N^*$ and $G\in\mathbb R^2$. 
There are two matrices: a rectangle matrix $P_n$ only depending on $\beta(G)$ and a square invertible matrix $S_n$ only depending on the directions $\theta_l$ such that $S_n=P_n \cdot M_n^C$. Moreover $rk(M_n^C)=2n+1$.
\end{lemma}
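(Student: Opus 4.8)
The plan is to exhibit the factorization directly and then use it to squeeze the rank. First I would make the entries of $M_n^C$ explicit. Since $e_l(G)=1$ and $\dx^{k_1}\dy^{k_2}e_l(G)=(I\kappa\cos\theta_l)^{k_1}(I\kappa\sin\theta_l)^{k_2}$, the entry of $M_n^C$ in the row indexed by $(k_1,k_2)$ (with $k_1+k_2\le n$) and the column $l$ equals
\e{
(M_n^C)_{\frac{(k_1+k_2)(k_1+k_2+1)}{2}+k_2+1,\,l}=\frac{(I\kappa)^{k_1+k_2}}{k_1!\,k_2!}(\cos\theta_l)^{k_1}(\sin\theta_l)^{k_2}.
}
So up to a row-dependent scalar, every entry is a homogeneous monomial in $(\cos\theta_l,\sin\theta_l)$ of total degree $m:=k_1+k_2$. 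The strategy is then to build $P_n$ so that $S_n:=P_nM_n^C$ is a square $(2n+1)\times(2n+1)$ matrix that is invertible; since $M_n^C$ has exactly $2n+1$ columns this forces $2n+1=rk(S_n)\le rk(M_n^C)\le 2n+1$, giving the rank claim at once.

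To construct $P_n$ I would proceed in two steps, both acting on the rows (the row index encodes $(k_1,k_2)$). First, left-multiply by the invertible diagonal matrix $\Delta_n$ whose $(k_1,k_2)$ diagonal entry is $k_1!\,k_2!/(I\kappa)^{k_1+k_2}$; this is well defined because $\kappa\ne0$ (as $N\ne0$), and $\Delta_nM_n^C=:B_n$ has for entries the pure monomials $(\cos\theta_l)^{k_1}(\sin\theta_l)^{k_2}$. Note that $\Delta_n$ carries all the dependence on $\kappa$, hence on $N$ and on $\beta(G)$, while $B_n$ depends only on the directions. Second, I would pass from monomials to Fourier modes: by de Moivre's formula $\cos m\theta$ and $\sin m\theta$ are homogeneous polynomials of degree \emph{exactly} $m$ in $(\cos\theta,\sin\theta)$, whose coefficients are binomial numbers independent of both $\kappa$ and the directions. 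Hence there is a fixed matrix $Q_n$, block-diagonal with respect to the grading by total degree, that selects from the degree-$m$ rows of $B_n$ the two combinations $\cos m\theta_l$ and $\sin m\theta_l$ (and, for $m=0$, the constant row $1$). Running $m$ from $0$ to $n$ produces exactly $1+2n$ rows. Setting $P_n:=Q_n\Delta_n$ and $S_n:=P_nM_n^C$, the matrix $S_n$ is the $(2n+1)\times(2n+1)$ matrix whose rows list $\{1\}\cup\{\cos m\theta_l,\sin m\theta_l\}_{m=1}^n$; it depends only on the directions $\theta_l$, whereas $P_n$ depends only on $\kappa$, that is on $\beta(G)$ through the choice of $N$.

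It remains to prove that $S_n$ is invertible. Grouping the trigonometric rows into complex exponentials by the invertible change of basis $\cos m\theta=(e^{Im\theta}+e^{-Im\theta})/2$ and $\sin m\theta=(e^{Im\theta}-e^{-Im\theta})/(2I)$, the matrix $S_n$ becomes, up to a fixed invertible factor, the Vandermonde-type matrix $(\omega_l^{\,r})_{-n\le r\le n,\;1\le l\le 2n+1}$ built on the points $\omega_l=e^{I\theta_l}$. After factoring out the invertible column scaling $\omega_l^{-n}$ this is a genuine Vandermonde matrix in the $\omega_l$, which is invertible iff the $\omega_l$ are pairwise distinct. Since the directions $\theta_l=2\pi(l-1)/(2n+1)$ give the distinct $(2n+1)$-th roots of unity, this holds; this is exactly the rank computation of Cessenat and Despr\'es in \cite{cd98}, whose argument uses only distinctness of the $\omega_l$ and therefore remains valid for $\kappa\in\mathbb C^*$ as noted above. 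Combining the three steps yields $S_n=P_nM_n^C$ with $S_n$ invertible, and hence $rk(M_n^C)=2n+1$.

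The main obstacle is not any single computation but keeping the two dependencies cleanly separated: one must check that the passage from monomials to the extremal Fourier modes $\cos m\theta,\sin m\theta$ uses \emph{only} the monomials of the same degree $m$, so that $Q_n$ is block-diagonal and universal and all the $\kappa$-dependence stays confined to the diagonal factor $\Delta_n$. If the de Moivre expansion were to mix degrees, $P_n$ would no longer split as a direction-free factor times a $\beta(G)$-only factor, and the structure asserted by the lemma would fail; verifying this homogeneity is the crux.
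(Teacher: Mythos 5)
Your proof is correct and follows essentially the same route as the paper: left-multiply $M_n^C$ so as to extract the $2n+1$ extremal Fourier modes of degree $m\le n$ (the paper does this in one stroke via the operators $(\partial_x\pm I\partial_y)^k$, whose rows are $e^{\pm Ik\theta_l}$, while you pass through pure monomials and de Moivre to reach $\cos m\theta_l,\sin m\theta_l$ — the same matrix up to an invertible degree-wise change of basis), then identify a Vandermonde matrix on the distinct points $e^{I\theta_l}$ and sandwich the rank between $2n+1$ and $\min(2n+1,(n+1)(n+2)/2)$. The factorization of $P_n$ into a universal block-diagonal part and a $\kappa$-dependent diagonal part is a clean way of justifying the dependence claims in the statement.
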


\begin{proof}
Consider $M_n^C$ be the matrix introduced in Definition \ref{def:Mns} so that for all $ (k_1,k_2)\in\mathbb N^2$, such that $k_1+k_2\leq n$
\enn{
\left(M_n^C \right)_{\frac{(k_1+k_2)(k_1+k_2+1)}{2}+k_2+1,l} =  \frac{\dx^{k_1} \dy^{k_2}e_l(G)}{k_1 ! k_2!}
= \frac{(I\kappa)^{k_1+k_2}}{k_1 ! k_2!} \cos^{k_1} \theta_l \sin^{k_2} \theta_l.
}
Define for all $ k\in[\![0,n]\!]$
 $$
(S_n)_{n\pm k+1,l}=\frac{1}{(I\kappa)^k}\left(\partial_x\pm I \partial_y  \right)^k e_l(G)=\frac{k!}{(I\kappa)^k}\sum_{s=0}^{k}\frac{(\pm I)^s\dx^{(k-s)} \dy^{s}e_l(G)}{(k-s)! s!}.
$$
 Thanks to the definition of $M_n^C$ one can check that 
$$
 (S_n)_{n\pm k+1,l}=\frac{k!}{(I\kappa)^k}\sum_{s=0}^{k}(\pm I )^s (M_n^C)_{\frac{((k-s)+s)((k-s)+s+1)}{2}+s+1,l},
$$
 so that $S_n$ is a $(2n+1)\times(2n+1)$ matrix that is a linear transform of $M_n^C$. More precisely, define $P_n$ as a $(2n+1)\times\frac{(n+1)(n+2)}{2}$ matrix such that 
$$(P_n)_{l,\frac{k(k+1)}{2}+s+1}=k!(\pm I)^s/(I\kappa)^k.$$ Then $S_n=P_n\cdot M_n^C$. As a consequence, $rk(M_n^C)\geq rk(S_n)$.

The rank of $S_n$ is now to be evaluated thanks to the definition of the plane waves $e_l$. Since $e_l(x,y)=e^{(i\kappa)\left((x-x_G)\cos \theta_l + (y-y_G) \sin \theta_l\right)}$ then
\enn{
\left(\partial_x\pm I \partial_y  \right)^k e_l = (i\kappa)^k (\cos \theta_l \pm I \sin \theta_l)^k e_l.
}
Consider that $z_l=\cos \theta_l + I \sin \theta_l=(\cos \theta_l - I \sin \theta_l)^{-1}$ because $|z_l|=1$, and since $e_l(G)=1$ it yields
\enn{
\left(\partial_x\pm I \partial_y  \right)^k e_l (G)= (I\kappa)^k(z_l)^{\pm k} \Rightarrow (S_n)_{n\pm k+1,l}=(z_l)^{\pm k}.
}
Thus $S_n$'s columns are proportional to the one of a VanDerMonde matrix and 
\enn{ det \ S_n = \prod_{i=1}^n z_i^{-n}\prod_{i<j}(z_i-z_j).}
From the choice of $\theta_l$s, for all $i\neq j$: $z_i\neq z_j$ so that $S_n$ is invertible and $rk(M_n^C)\geq rk(S_n)=2n+1$. Since $$rk(M_n^C) \leq \min \left(2n+1, \frac{(n+1)(n+2)}{2}\right)=2n+1$$ the proof is then completed.
\end{proof}

\begin{lemma}\label{lem:MnMnC}
Suppose that $N\in\mathbb C$ is such that $N\neq 0$, $n\in\mathbb N^*$ and $G\in\mathbb R^2$. 
Consider $\mathcal E(G,N,p,q)$ introduced in Definition \ref{def:E(G,p)}, together with $M_n$ and $M_n^C$ introduced in Definition \ref{def:Mns}. 
Then there is a lower triangular matrix $L_n$, which diagonal coefficients are all equal to $1$ and which other coefficients are linear combinations of the derivatives of $\beta$ evaluated at $G$, such that
\e{\label{eq:MnMnC}
M_n=L_n\cdot M_n^C.}
As a consequence $rk(M_n)=rk(M_n^C)$ and both $\|L_n\|$ and $\|(L_n)^{-1}\|$ are bounded by a constant only depending on $\beta$.
\end{lemma}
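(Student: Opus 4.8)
The plan is to read off the entries of both matrices as values of bivariate monomials sampled at the $2n+1$ nodes $(\lambda_{1,0}^l,\lambda_{0,1}^l)=N(\cos\theta_l,\sin\theta_l)$, and to use Proposition \ref{prop:derphi} to present $M_n$ as a triangular perturbation of $M_n^C$. First I would rewrite the entries of $M_n^C$: since $e_l(x,y)=e^{I\kappa(\cdots)}$ with $I\kappa=N$, the classical plane wave satisfies $\partial_x^{k_1}\partial_y^{k_2}e_l(G)=(N\cos\theta_l)^{k_1}(N\sin\theta_l)^{k_2}=(\lambda_{1,0}^l)^{k_1}(\lambda_{0,1}^l)^{k_2}$, so the entry of $M_n^C$ in the row indexed by $(k_1,k_2)$ and column $l$ is exactly $\frac{(\lambda_{1,0}^l)^{k_1}(\lambda_{0,1}^l)^{k_2}}{k_1!k_2!}$. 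In other words, the $(k_1,k_2)$-row of $M_n^C$ is the monomial $\frac{x^{k_1}y^{k_2}}{k_1!k_2!}$ evaluated at the $2n+1$ nodes.

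Next I would apply Proposition \ref{prop:derphi} with $(i,j)=(k_1,k_2)$ to the generalized plane wave, giving $\partial_x^{k_1}\partial_y^{k_2}\varphi_l(G)=(\lambda_{1,0}^l)^{k_1}(\lambda_{0,1}^l)^{k_2}+R_{k_1,k_2}(\lambda_{1,0}^l,\lambda_{0,1}^l)$, where ${\rm d}R_{k_1,k_2}\leq k_1-2$ and the coefficients of $R_{k_1,k_2}$ depend only on $N$ and the derivatives of $\beta$ at $G$. Writing $R_{k_1,k_2}=\sum_{a+b\leq k_1-2}c^{k_1,k_2}_{a,b}\,x^a y^b$ and dividing by $k_1!k_2!$, the $(k_1,k_2)$-row of $M_n$ equals the $(k_1,k_2)$-row of $M_n^C$ plus $\sum_{a+b\leq k_1-2}\frac{c^{k_1,k_2}_{a,b}\,a!\,b!}{k_1!k_2!}$ times the $(a,b)$-row of $M_n^C$. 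This is precisely an identity $M_n=L_n M_n^C$, with $L_n$ carrying $1$ on the diagonal and the off-diagonal entry $\frac{c^{k_1,k_2}_{a,b}a!b!}{k_1!k_2!}$ in position $\bigl((k_1,k_2),(a,b)\bigr)$.

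The crux is then to verify that $L_n$ is lower triangular for the chosen row ordering, and this is exactly where the bound ${\rm d}R_{k_1,k_2}\leq k_1-2$ is used: every monomial $x^ay^b$ occurring in $R_{k_1,k_2}$ has total degree $a+b\leq k_1-2<k_1+k_2$, hence lives in a strictly lower total-degree block. Since the row index $\frac{(k_1+k_2)(k_1+k_2+1)}{2}+k_2+1$ orders rows first by total degree, a one-line computation shows that the largest index in the degree-$(m-1)$ block equals $\frac{m(m+1)}{2}$, strictly below the smallest index $\frac{m(m+1)}{2}+1$ of the degree-$m$ block; thus each coupling row $(a,b)$ sits strictly above row $(k_1,k_2)$, and $L_n$ is unit lower triangular. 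For $k_1\in\{0,1\}$ one has $R_{k_1,k_2}=0$, so $M_n$ and $M_n^C$ agree there, consistent with a $1$ on the diagonal and zeros to the left. The dependence of the off-diagonal entries on $N$ and the derivatives of $\beta$ at $G$ is read off from the explicit induction \eqref{eq:IF} feeding the Fa\`a di Bruno expansion of Proposition \ref{prop:derphi}; under the $\beta$-normalization these entries reduce to linear combinations of the derivatives of $\beta$, as in Remark \ref{rem:BtCPW}.

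Finally I would collect the consequences. Being unit triangular, $\det L_n=1$, so $L_n$ is invertible and $rk(M_n)=rk(L_n M_n^C)=rk(M_n^C)$. The entries of $L_n$ are finitely many fixed functions of $N$ and the derivatives of $\beta$ at $G$, so $\|L_n\|$ is bounded by a constant depending only on $\beta$ (and the fixed $N$); moreover $L_n^{-1}$ is again unit lower triangular with entries that are polynomial expressions in those of $L_n$ (equivalently $L_n-\mathrm{Id}$ is strictly lower triangular, hence nilpotent, and the inverse is a finite sum of its powers), which yields the same kind of bound for $\|L_n^{-1}\|$. I expect the main obstacle to be the bookkeeping that matches the degree bound of Proposition \ref{prop:derphi} to the row ordering, i.e.\ confirming rigorously that a strictly smaller total degree forces a strictly smaller row index; the coefficient-dependence statement and the norm bounds are comparatively routine.
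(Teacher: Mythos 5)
Your proposal is correct and follows essentially the same route as the paper's proof: both rest on Proposition \ref{prop:derphi}, the identification $\partial_x^{k_1}\partial_y^{k_2}e_l(G)=(\partial_x e_l(G))^{k_1}(\partial_y e_l(G))^{k_2}$, and the observation that the degree bound ${\rm d}R_{k_1,k_2}\leq k_1-2$ places the perturbation rows in strictly lower total-degree blocks, which the row numbering puts higher in the matrix. Your version merely makes the triangularity check and the nilpotency argument for $\|L_n^{-1}\|$ more explicit than the paper does.
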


The following proof is straightforward considering the feature of the derivatives of $\varphi_l$ described in Proposition \ref{prop:derphi}.

\begin{proof}
From \eqref{eq:derphi} there exists a polynomial $R_{i,j}\in\mathbb C[X,Y]$ with $Deg\ R_{i,j}\leq i-2$ such that
\e{\label{eq:MnMnCp}
\forall (i,j) \in \mathbb N^2,\dx^{i}\dy^{j}\varphi_l (G) = \dx^{i}\dy^{j}e_l (G) + R_{i,j} (\dx e_l (G),\dy e_l (G) ).
}
The coefficients of $R_{i,j}$ do not depend on the basis function considered, but only depends on $\beta$ and its derivatives evaluated at $G$. By construction of the classical plane wave $e_l$, one has 
\enn{
\left\{\tab{rl}{
\dx^{k}\dy^{m}e_l (G) &=\left(\dx e_l (G)\right)^k\left(\dy e_l (G)\right)^m,
\\ &= (I\kappa)^{k+m} \cos( \theta)^k (i\sin(\theta))^m.
}\right.}
 The numbering of the rows in matrices $M_n^C$ and $M_n$ is set up such that the derivatives of smaller order appear higher in the matrix, which proves \eqref{eq:MnMnC}. 
Indeed \eqref{eq:MnMnCp} shows that any coefficient of $M_n$ is the sum of the corresponding coefficient in $M_n^C$ plus a linear combination - which coefficients do not depend on the column that is considered but only on $\beta$ and its derivatives evaluated at $G$ - of terms that appear higher in the corresponding column of $M_n$.

The rank of $M_n$ is then equal to the rank of $M_n^C$, and $\|L_n\|$ and $\|(L_n)^{-1}\|$ do only depend on the coefficients of $R_{i,j}$. As a result they do not depend on the basis functions but only on the coefficient $\beta$ and its derivatives at $G$.
\end{proof}

\begin{theorem}\label{th:u-ua}
Suppose that $n\in \mathbb N$ and that $u$ is a solution of scalar wave equation \eqref{pbi} belongs to $\mathcal C^{n+1}$. Consider then $q\geq n+1$, $p=2n+1$, and $\mathcal E (G,N,p,q)$ introduced in Definition \ref{def:E(G,p)}. 
Then there are a function $u_a\in Span\ \mathcal E(G,N,p,q)$ depending on $\beta$ and $n$, and a constant $C_{N,\Omega}$ depending on $N$, $\beta$ and $n$ such that for all $M\in \mathbb R^2$
\e{\label{eq:gradumua}
\left\{ 
\begin{array}{l}
\left| u(M)-u_a(M)\right| \leq C_{N,\Omega} |M-G|^{n+1} \left\| u \right\|_{\mathcal C^{n+1}(\Omega)} ,\\
\left\| \nabla u(M)-\nabla u_a(M)\right\| \leq C_{N,\Omega} |M-G|^{n} \left\| u \right\|_{\mathcal C^{n+1}(\Omega)}. 
\end{array}
\right.
}
\end{theorem}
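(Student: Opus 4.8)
The plan is to build $u_a=\sum_{l=1}^{2n+1} x_l \varphi_l$ whose Taylor polynomial of degree $n$ at $G$ coincides with that of $u$, and then to close the estimates by a Taylor remainder argument. Collecting the coefficients $\dx^{k_1}\dy^{k_2} u(G)/(k_1!k_2!)$ for $k_1+k_2\le n$ into a vector $U_n$ of length $(n+1)(n+2)/2$, and recalling from Definition \ref{def:Mns} that the columns of $M_n$ hold the analogous coefficients of the $\varphi_l$, the requirement that $u_a$ and $u$ share the same degree-$n$ Taylor polynomial at $G$ reads exactly $M_n x = U_n$, with $x=(x_l)_l\in\mathbb C^{2n+1}$. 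Since $M_n$ has strictly more rows than columns as soon as $n\ge 2$, the heart of the matter is to prove that this overdetermined system is solvable, i.e. that $U_n$ lies in the column space of $M_n$.

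First I would identify that column space. Let $V\subset \mathbb C^{(n+1)(n+2)/2}$ be the set of vectors of Taylor coefficients (up to degree $n$) of functions annihilated by $-\Delta+\beta$ at $G$: writing the vanishing of the degree-$m$ coefficient of $(-\Delta+\beta)w$ for $0\le m\le n-2$ gives the \emph{linear} recurrence $(k_1+2)(k_1+1)a_{k_1+2,k_2}+(k_2+2)(k_2+1)a_{k_1,k_2+2}=[\beta w]_{k_1,k_2}$, which is precisely the linearization of \eqref{eq:TExy} and expresses every coefficient with first index $\ge 2$ in terms of the free coefficients $a_{i,j}$ with $i\in\{0,1\}$, $i+j\le n$. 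Counting these gives $\dim V=(n+1)+n=2n+1$. Now $U_n\in V$ because $u$ solves \eqref{eq:Helm} exactly and its coefficients through degree $n$ are defined ($u\in\mathcal C^{n+1}$). Likewise each column of $M_n$ lies in $V$: by the design identity \eqref{-lapl+al} with $q\ge n+1$ one has $(-\Delta+\beta)\varphi_l=(\beta-P_\Delta)\varphi_l=O(|M-G|^{q})$, so the Taylor coefficients of $(-\Delta+\beta)\varphi_l$ vanish through degree $q-1\ge n$, whence the coefficients of $\varphi_l$ obey the same recurrence defining $V$. Since $rk(M_n)=2n+1$ by Lemma \ref{lem:MnMnC} and its columns live in the $(2n+1)$-dimensional space $V$, the columns of $M_n$ span $V$; in particular $U_n\in V=\mathrm{colspace}(M_n)$, and $M_n x=U_n$ has a solution $x$, unique by full column rank.

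Next I would quantify $x$. Full column rank gives $\|x\|\le \|U_n\|/\sigma_{\min}(M_n)$, and the factorizations of Lemmas \ref{lem:rkMnC} and \ref{lem:MnMnC} yield $\sigma_{\min}(M_n)\ge \sigma_{\min}(S_n)/(\|L_n^{-1}\|\,\|P_n\|)$: indeed $M_n=L_n M_n^C$ with $L_n$ invertible forces $\sigma_{\min}(M_n)\ge \sigma_{\min}(M_n^C)/\|L_n^{-1}\|$, while $S_n=P_n M_n^C$ with $S_n$ invertible forces $\sigma_{\min}(M_n^C)\ge \sigma_{\min}(S_n)/\|P_n\|$. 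Hence $\|x\|\le C_{N,\Omega}\|U_n\|\le C_{N,\Omega}\|u\|_{\mathcal C^{n}(\Omega)}$, where $\|L_n^{-1}\|$ is controlled by $\beta$ (Lemma \ref{lem:MnMnC}) and the factors $\|P_n\|$, $\|S_n^{-1}\|$ depend on $N$ through the entries $\sim N^{-k}$ coming from $I\kappa=N$ in Lemma \ref{lem:rkMnC} — exactly the mechanism behind the blow-up of $C_{N,\Omega}$ as $N\to0$. Finally, setting $w=u-u_a$, by construction $\dx^{k_1}\dy^{k_2}w(G)=0$ for all $k_1+k_2\le n$, so Taylor's formula with remainder gives $|w(M)|\le C|M-G|^{n+1}\|w\|_{\mathcal C^{n+1}(\Omega)}$ and $\|\nabla w(M)\|\le C|M-G|^{n}\|w\|_{\mathcal C^{n+1}(\Omega)}$; since $\|w\|_{\mathcal C^{n+1}}\le \|u\|_{\mathcal C^{n+1}}+\|x\|\max_l\|\varphi_l\|_{\mathcal C^{n+1}(\Omega)}\le C_{N,\Omega}\|u\|_{\mathcal C^{n+1}(\Omega)}$, the two estimates of \eqref{eq:gradumua} follow.

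The main obstacle is the solvability step: one must turn the \emph{approximate} annihilation $(-\Delta+\beta)\varphi_l=O(|M-G|^{q})$ into \emph{exact} linear relations among the low-order Taylor coefficients of $\varphi_l$, recognize these as the relations cutting out the solution space $V$ of the genuine equation, and combine this with the rank count $rk(M_n)=2n+1=\dim V$ to conclude $\mathrm{colspace}(M_n)=V\ni U_n$. The secondary technical point, needed for a clean constant, is the propagation of the $N$-dependence through the lower bound on $\sigma_{\min}(M_n)$; everything else is the routine Taylor-remainder bookkeeping.
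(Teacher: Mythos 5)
Your proposal is correct and follows essentially the same route as the paper: the same linear system $M_n x = U_n$, the same identification of a $(2n+1)$-dimensional subspace of admissible Taylor-coefficient vectors (the paper's set $K$ is your $V$) combined with the rank count from Lemmas \ref{lem:rkMnC} and \ref{lem:MnMnC} to get solvability, the same factorization-based bound on the coefficients $x_l$, and the same Taylor-remainder conclusion. (Your dimension count for $V$ via the free coefficients, giving $(n+1)+n=2n+1$, is in fact cleaner than the paper's, which miscounts the number of defining relations but reaches the same conclusion.)
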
 
\begin{proof}
The idea of the proof is to look for $\displaystyle u_a= \sum_{l=1}^{2n+1} x_l \varphi_l$ by fitting its Taylor expansion to the one of $u$. This will be done by solving a linear system concerning the unknowns $(x_l)_{l\in[\![1,2n+1]\!]}$.

Since $u$ belongs to $\mathcal C^{n+1}$ and for all $l\in [\![1,2n+1]\!]$ the basis function $\varphi_l$ belongs to $\mathcal C^\infty$, their Taylor expansions read for all $M=(x,y) \in \Omega$
\enn{
\left| u(x,y) - \sum_{m=0}^n \sum_{k_1+k_2=m} B_{k_1k_2}x^{k_1} y^{k_2} \right| \leq C |M-G|^{n+1} \|u\|_{\mathcal C^{n+1}},
}
\enn{
\left| \varphi_l(x,y) - \sum_{m=0}^n \sum_{k_1+k_2=m} M^l_{k_1k_2}x^{k_1} y^{k_2} \right| \leq C |M-G|^{n+1} \|\varphi_l\|_{\mathcal C^{n+1}},
}
where for the sake of simplicity $M^l_{k_1k_2}$ stands for the coefficient of $M_n$ that corresponds to the term $\dx^{k_1} \dy^{k_2} \varphi_l /(k_1!k_2!)$, namely the coefficient $(M_n)_{\frac{(k_2+k_1)(k_2+k_1+1)}{2}+k_2+1,l}$, and in the same way $B_{k_1,k_2}$ stands for the coefficient $(B_n)_{\frac{(k_2+k_1)(k_2+k_1+1)}{2}+k_2+1}$. The system to be solved is then
\begin{equation*}
\left\{
\begin{array}{l}
 \text{Find } (x_l)_{l\in[\![1,2n+1]\!]} \in \mathbb C^{2n+1} \text{ s. t.}\\
\displaystyle \sum_{l=1}^{2n+1} M^l_{k_1,k_2} x_l = B_{k_1,k_2},\ \forall m \in[\![0,n]\!], \ \forall (k_1,k_2)\in [\![0,n]\!]^2 \text{ s. t. } k_1+k_2=m.
\end{array}
\right.
\end{equation*}

In order to study the system's matrix, the equations depending on $(k_1,k_2)$ have to be numbered: they will be considered with increasing $m=k_1+k_2$, and with decreasing $k_1$ for a fixed value of $m$. 
Defining the corresponding vector $B_n\in \mathbb C^{\frac{(n+1)(n+2)}{2}}$, together with the unknown $X^n=(x_1,x_2,\cdots,x_{2n+1})\in \mathbb C^{2n+1}$, the system now reads
\begin{equation*}
\left\{
\begin{array}{l}
 \text{Find } X^n \in \mathbb C^{2n+1} \text{ such that}\\
   M_n \cdot X^n = B_n
\end{array}
\right.
\end{equation*}
where $M_n\in \mathbb C^{\frac{(n+1)(n+2)}{2}\times (2n+1)}$ is the matrix from Definition \ref{def:Mns}.

Since the system is not square, there is a solution if and only if $B_n\in Im(M_n)$. 

{\bf i)} The technical point is to prove that $rk(M_n)=2n+1$. It is straightforward from Lemmas \ref{lem:MnMnC} and \ref{lem:rkMnC}.

{\bf ii)} There exists a subset $K \subset  \mathbb C^{\frac{(n+1)(n+2)}{2}}$ such that $Im(M_n)\subset K$ and $B_n\in K$. This subspace $K$ is built from the fact that the basis functions are designed to fit the Taylor expansion of the scalar wave equation:
\enn{
K:= \left\{ (C_{k_1,k_2}) \in  \mathbb C^{\frac{(n+1)(n+2)}{2}}, \forall(k_1,k_2)\in\mathbb N^2, k_1+k_2\leq n-2, \phantom{\sum_{j=0}^{k_1} \frac{\dx^i \dy^j \beta(G)}{i!j!} C_{k_1-j,k_2} (k_1+1)}\right.
}
\e{\label{eq:defK}
\left. (k_1+1)(k_1+2)C_{k_1+2,k_2} +(k_2+1)(k_2+2)C_{k_1,k_2+2}=\sum_{i=0}^{k_1}\sum_{j=0}^{k_2} \frac{\dx^{i} \dy^{j} \beta(G)}{i!j!} C_{k_1-i,k_2-j} \right\}
}
All basis functions $\varphi_l$, $l\in [\![1,2n+1]\!], $ satisfy $(-\Delta+\beta)\varphi_l=(-P_{\Delta,l}+\beta)\varphi_l$.
From the  equation \eqref{eq:a-PL} with $q\geq n+1$, it is then straightforward to see that $Im(M_n)\subset K$. The fact that $B_n\in K$ simply stems from plugging the Taylor expansions of $u$ and $\beta$ into scalar wave equation.

{\bf iii)} The dimension of $K$ defined by \eqref{eq:defK} is $2n+1$. Indeed, one can check - using the same numbering as previously for the equations - that $K$ is defined by $n(n+1)/2$ linearly independent relations on $  \mathbb C^{\frac{(n+1)(n+2)}{2}}$, so that its dimension is $(n+1)(n+2)/2-n(n+1)/2$.

As a consequence, from the solution to the system  $M_n \cdot X^n = B_n$ that now is known to exist, one can define $\displaystyle u_a= \sum_{l=1}^{2n+1} x_l \varphi_l$. Thanks to that definition and to the Taylor expansions of $u$ and the $\varphi_l$s it yields
\enn{
\left| u(M)-u_a(M)\right| \leq C |M-G|^{n+1} \left( \left\| u \right\|_{\mathcal C^{n+1}}+\left\| u_a \right\|_{\mathcal C^{n+1}}\right).
}
Moreover one has the identity $X^n=(S_n^C)^{-1} P_n^C (L_n)^{-1}B_n$, where $(S_n^C)^{-1} P_n^C$ is bounded from above by $\displaystyle \sup_{l\in [\![1,2n+1]\!]} \| e_l \|_{\mathcal C^{n+1}} $, see Lemma \ref{lem:rkMnC}, $(L_n)^{-1}$ is bounded from above by a constant depending only on $\beta$ and its derivatives from Lemma \ref{lem:MnMnC}, and $B_n$ is bounded by $\|u\|_{\mathcal C^{n+1}}$. 
Since for all $ l\in [\![1,2n+1]\!] $ it yields $|x_l|\leq C_{N,\Omega}\|u\|_{\mathcal C^{n+1}}$, it turns out to be the first part of \eqref{eq:gradumua}:
\enn{
\left| u(M)-u_a(M)\right| \leq C_{N,\Omega}(2n+2) |M-G|^{n+1} \left\| u \right\|_{\mathcal C^{n+1}} .
}

At last, the second part of \eqref{eq:gradumua} stems from taking the Taylor Lagrange formula of the gradient of $u-u_a$, up to the order $n$, since 
$$\displaystyle \sum_{m=0}^n \sum_{k_1+k_2=m}\left(  B_{k_1k_2}(x-x_G)^{k_1} (y-y_G)^{k_2}  - \sum_{l=1}^{2n+1}\left(x_lM^l_{k_1k_2}(x-x_G)^{k_1} (y-y_G)^{k_2}\right)
 \right) =0.$$ That is: for all $M=(x,y)\in \Omega$ there are $\zeta_1$, $\zeta_2$ in $\mathbb R^2$ on the segment line between $M$ and $G$ such that
\begin{equation*}
\left\{
\begin{array}{l}
 \displaystyle \dx(u - u_a)(x,y) =
 \sum_{l=0}^{n} \frac{\dx^{l+1}\dy^{n-l}(u-u_a)(\zeta_1)}{l!(n-l)!}  (x-x_G)^{l}  (y-y_G)^{n-l},
\\\displaystyle \dy(u - u_a) (x,y)=
 \sum_{l=0}^{n} \frac{\dx^{l}\dy^{n-l+1}(u-u_a)(\zeta_2)}{l!(n-l)!}  (x-x_G)^{l}  (y-y_G)^{n-l}
\end{array}
\right.
\end{equation*}
which indeed leads to the desired inequality.
\end{proof}

\begin{remark}
Some comments on the hypothesis on $q$ are to be found in the next paragraph.
\end{remark}

\subsection{ Estimate of $C_{N,\Omega}$ with respect to $N\rightarrow 0$}
\label{ssec:estCN}
 Because the Taylor expansion actually reads
$$(u - u_a)(x,y) = \sum_{j=0}^q \left( \frac{\dx^j\dy^{q-j} u(G)}{j!(q-j)!} - \sum_{l=1}^p x_l \frac{\dx^j\dy^{q-j} \varphi_l(G)}{j!(q-j)!} \right)(x-x_G)^{j}  (y-y_G)^{q-j}+ O\left( |M-G|^{q+1} \right),$$
one can see that if $\displaystyle C_{N,\Omega} = \sum_{l=1}^p x_l  \sum_{j=0}^q\frac{\dx^j\dy^{q-j} \varphi_l(G)}{j!(q-j)!} $ blows up when $N$ goes to zero, then so does $C_{N,\Omega}$. 

As displayed in Proposition \ref{prop:derphi}, each $\dx^j\dy^{q-j} \varphi_l(G)$ term is a polynomial with respect to $(\luoz,\luzo)$ which higher degree term is $\luoz^j\luzo^{q-j}$, that is to say that each derivative term from $C_{N,\Omega}$ is a polynomial with respect to $N$ which higher degree term is $\cos (\theta_l)^j (i\sin (\theta_l))^{q-j} N^q$. As a result these terms are not zero and tend to zero as $N$ tends to zero, at most as $N^q$.

On the other hand since $X^n$ satisfies $M_n^C \cdot X^n = (L_n)^{-1} B_n$, one can describe the asymptotic behavior of the $x_l$s with respect to $N$ as follows. Consider the $(2n+1)\times (2n+1)$ square system defined as: for all $k\in\mathbb N$ such that $0\leq k \leq n$
$$ \sum_{l=1}^n \left( \sum_{s=0}^k C_k^s (\pm I)^s\dx^{(k-s)} \dy^s e_l(G) \right) x_l = \big( (L_n)^{-1} B_n \big)_{n\pm k+1}.$$
The right hand side of this system is independent on $N$ while its determinant is $N^{n(n+1)} det(S_n)$. As a result the coefficients $x_l$ behave (as long as it is non zero) as 
$C_{N,\Omega}/N^{n(n+1)}$ 
as $N$ tends to zero.

As a consequence, $C_{N,\Omega}$ as well as $C_{N,\Omega}$ blow up at least as $1/N^{(n-1)(n+1)}$ as $N$ goes to zero.

\subsection{Numerical validation}
In order to validate the Theorem \ref{th:u-ua}, each of the numerical validation case is computed, for a given value of $n$, setting $q=n+1$ and $p=2n+1$. The test case considered is $\beta(x,y)=x-1$, to approximate the exact solution $u_e(x,y)=Airy(x)e^{iy}$. See \cite{LMIGBD} for the physical motivation of this test case: its main interest is that the coefficient vanishes along the line $x=1$, which represents a plasma cut-off that reflects incoming waves.

Of course since the theoretical results give local approximation properties, the validation procedure itself will be local as well. As stated in the theorem $u_e$ can be approximated by a function $u_a$ that belongs to the approximation space $\mathcal E (G,N,p,q)$, space that is built with either the $\beta$-normalization or the constant-normalization. 

The idea is to follow the error $\max \left|u-u_a\right|$ on disks with decreasing radius $h$ in order to observe the order of convergence with respect to $h$. Several different cases are proposed to validate the theoretical order of convergence, and an additional case concerns the behavior of the basis functions designed with the $\beta$-normalization as the approximation point gets closer to the cut-off.

\subsubsection{In the propagative zone}
The point $G=[-3,1]$ is in the propagative zone. Then concentric disks are centered on $G$ with radius $h=1/2^k$, increasing the value of $k$. Following the theorem, the expected order of convergence is $n+1$. 

Figure \ref{fig:cvP} displays computed convergence results that fit perfectly the theoretical result. A set of $p=11$ classical Plane Waves is used as a control case, since $p=11$ is the highest number of basis functions used in the different cases with the Generalized Plane Waves. Note that machine precision is reached in some cases.
\begin{figure}
\begin{center}
\includegraphics{./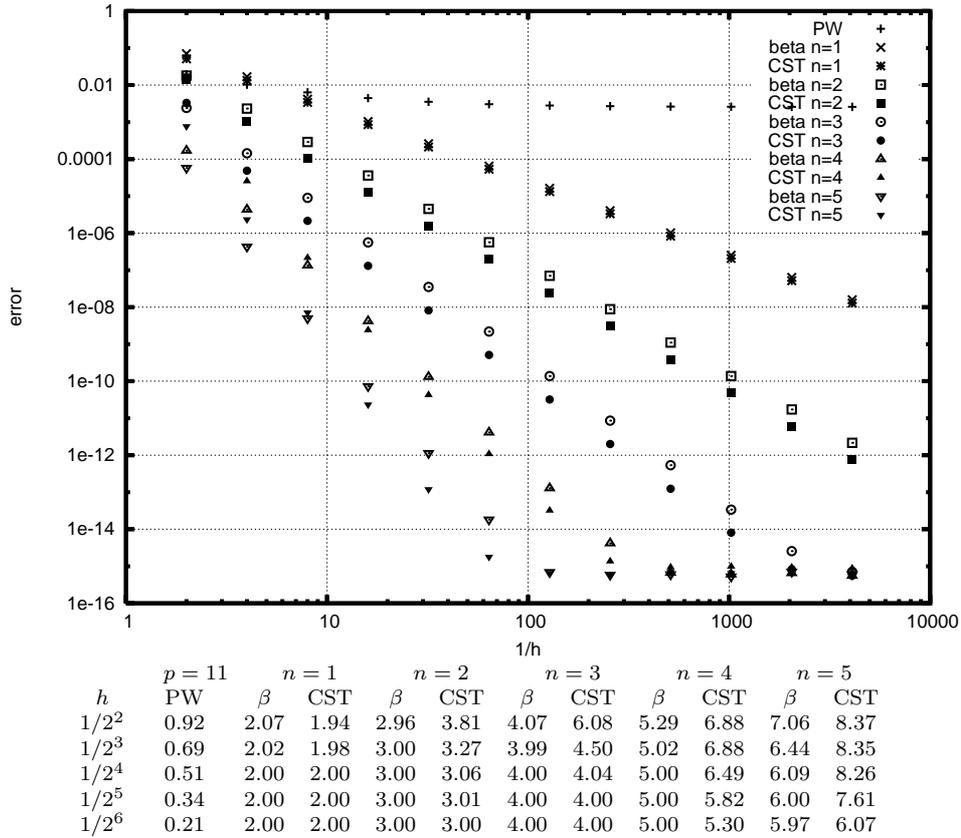}
\begin{tabular}
{@{}c@{\hspace{.3cm}}c@{\hspace{.3cm}}c@{\hspace{.3cm}}c@{\hspace{.3cm}}c@{\hspace{.3cm}}c@{\hspace{.3cm}}c@{\hspace{.3cm}}c@{\hspace{.3cm}}c@{\hspace{.3cm}}c@{\hspace{.3cm}}c@{\hspace{.3cm}}c@{}}
 \multicolumn{1}{c}{} & \multicolumn{1}{c}{$p=11$} & \multicolumn{2}{c}{$n=1$}& \multicolumn{2}{c}{$n=2$}& \multicolumn{2}{c}{$n=3$}& \multicolumn{2}{c}{$n=4$}& \multicolumn{2}{c}{$n=5$} \\
$h$ & PW & $\beta$ & CST & $\beta$ & CST & $\beta$ &CST & $\beta$ & CST & $\beta$ & CST\\ 
1/$2^{2}$ & 0.92 & 2.07 & 1.94 & 2.96 & 3.81 & 4.07 & 6.08 & 5.29 & 6.88 & 7.06 & 8.37 \\ 
1/$2^{3}$ & 0.69 & 2.02 & 1.98 & 3.00 & 3.27 & 3.99 & 4.50 & 5.02 & 6.88 & 6.44 & 8.35 \\ 
1/$2^{4}$ & 0.51 & 2.00 & 2.00 & 3.00 & 3.06 & 4.00 & 4.04 & 5.00 & 6.49 & 6.09 & 8.26 \\ 
1/$2^{5}$ & 0.34 & 2.00 & 2.00 & 3.00 & 3.01 & 4.00 & 4.00 & 5.00 & 5.82 & 6.00 & 7.61 \\ 
1/$2^{6}$ & 0.21 & 2.00 & 2.00 & 3.00 & 3.00 & 4.00 & 4.00 & 5.00 & 5.30 & 5.97 & 6.07 
\end{tabular}
\end{center}
\caption{Convergence results in the propagative zone, computed  at $G=[-3,1]$ with different basis functions. Comparison between classical Plane Waves and Generalized Plane Waves for both $\beta$ and constant normalizations. 
Some of the associated orders of convergence are also provided.
}\label{fig:cvP}
\end{figure}

\subsubsection{In the non propagative zone}
The point $G=[2,1]$ is in the non propagative zone. Again concentric disks are centered on $G$ with radius $h=1/2^k$, increasing the value of $k$, and the expected order of convergence is $n+1$. There is no classical Plane Wave that can be computed here since $\beta(G)>0$.

Figure \ref{fig:cvNP} displays computed convergence results that fit perfectly the theoretical result as well. Again machine precision is reached in some cases.
\begin{figure}
\begin{center}
\includegraphics{./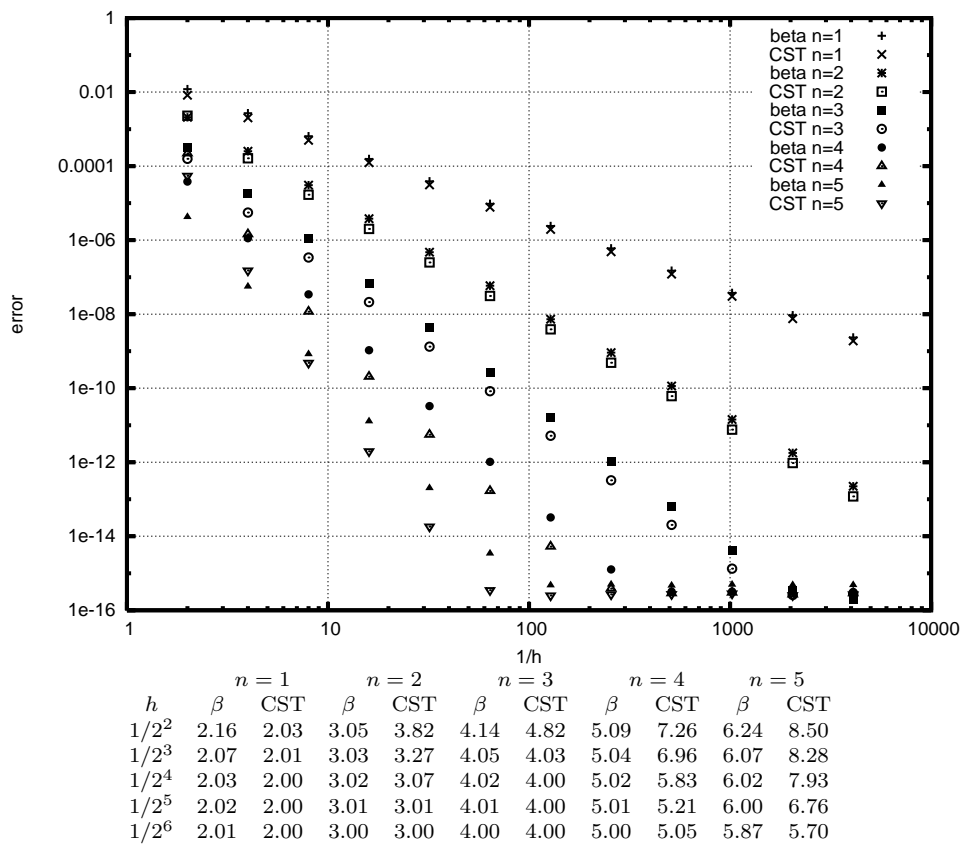}
\begin{tabular}
{@{}c@{\hspace{.3cm}}c@{\hspace{.3cm}}c@{\hspace{.3cm}}c@{\hspace{.3cm}}c@{\hspace{.3cm}}c@{\hspace{.3cm}}c@{\hspace{.3cm}}c@{\hspace{.3cm}}c@{\hspace{.3cm}}c@{\hspace{.3cm}}c@{}}
 \multicolumn{1}{c}{} & \multicolumn{2}{c}{$n=1$}& \multicolumn{2}{c}{$n=2$}& \multicolumn{2}{c}{$n=3$}& \multicolumn{2}{c}{$n=4$}& \multicolumn{2}{c}{$n=5$} \\
$h$ &  $\beta$ & CST & $\beta$ & CST & $\beta$ &CST & $\beta$ & CST & $\beta$ & CST\\ 
1/$2^{2}$  & 2.16 & 2.03 & 3.05 & 3.82 & 4.14 & 4.82 & 5.09 & 7.26 & 6.24 & 8.50 \\ 
1/$2^{3}$  & 2.07 & 2.01 & 3.03 & 3.27 & 4.05 & 4.03 & 5.04 & 6.96 & 6.07 & 8.28 \\ 
1/$2^{4}$  & 2.03 & 2.00 & 3.02 & 3.07 & 4.02 & 4.00 & 5.02 & 5.83 & 6.02 & 7.93 \\ 
1/$2^{5}$  & 2.02 & 2.00 & 3.01 & 3.01 & 4.01 & 4.00 & 5.01 & 5.21 & 6.00 & 6.76 \\ 
1/$2^{6}$  & 2.01 & 2.00 & 3.00 & 3.00 & 4.00 & 4.00 & 5.00 & 5.05 & 5.87 & 5.70 
\end{tabular}
\end{center}
\caption{Convergence results in the non-propagative zone, computed  at $G=[2,1]$ with different basis functions. Comparison between Generalized Plane Waves for $\beta$ and constant normalizations. 
Some of the associated orders of convergence are also provided.
}\label{fig:cvNP}
\end{figure}

\subsubsection{Toward the cut-off : $\beta\rightarrow 0$}
Since $\beta(x=1,y)=0$, it is interesting to look at what happens with the $\beta$-normalization along this line. Again the value of $h$ is  $h=1/2^k$, increasing the value of $k$. The point $G_h=[1-h,1]$ remains in the propagative zone. Then disks are here centered on a point $G_h$ that stands at a distance $h$ from the line $x=1$, still with radius $h$. As a result all the disks are tangent to the cut-off line defined by $x=1$. Classical Plane Waves are compared to the $\beta$-normalization with the same number of basis functions. 

Figure \ref{fig:cvtoCO} show that the $\beta$ normalized Generalized plane waves give a high order approximation of $u$ even getting closer to the vanishing line $x=1$, as long as $h$ is not too small. Note that  there does not seem to be a significant difference between the two type of functions. This is observed on numerical results even if there is no corresponding theoretical result.
\begin{figure}
\begin{center}
\includegraphics{./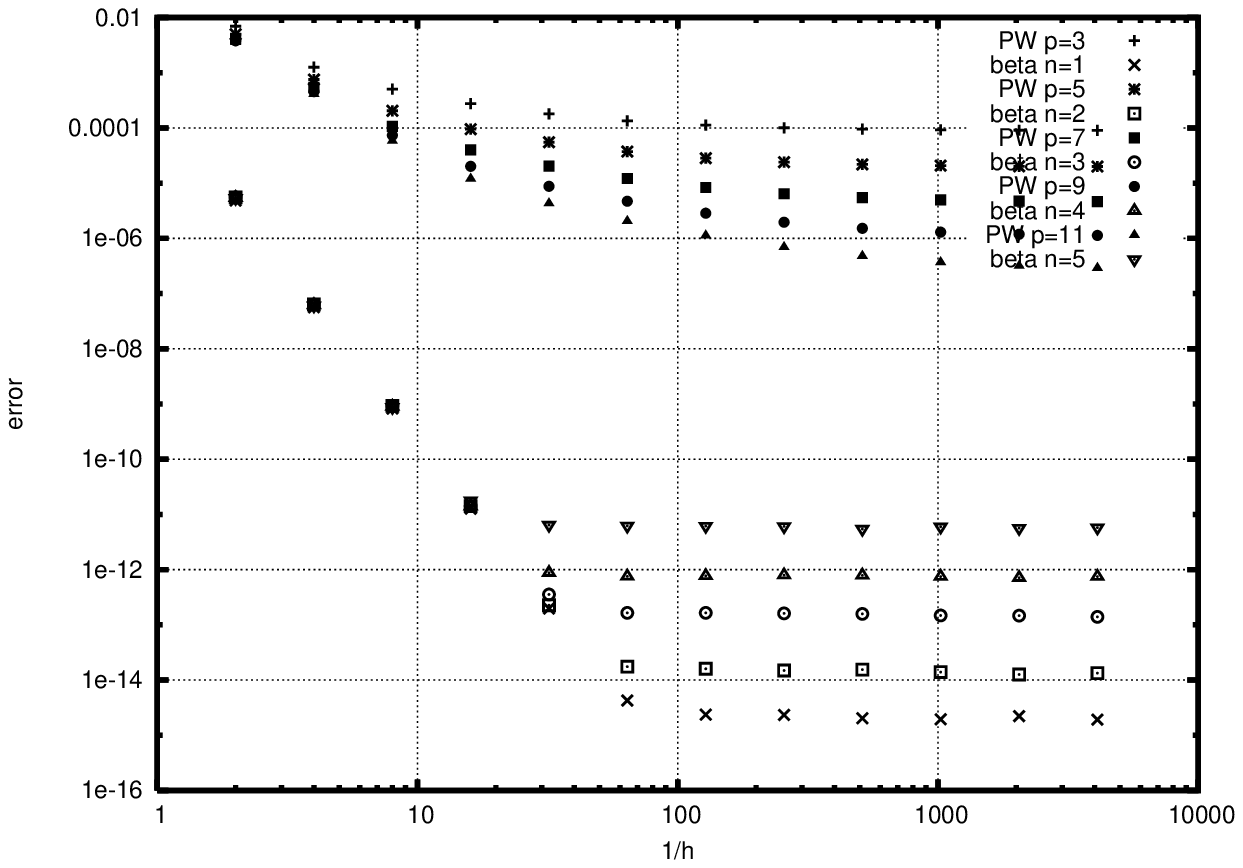}
\begin{tabular}
{@{}c@{\hspace{.3cm}}c@{\hspace{.3cm}}c@{\hspace{.3cm}}c@{\hspace{.3cm}}c@{\hspace{.3cm}}c@{\hspace{.3cm}}c@{\hspace{.3cm}}c@{\hspace{.3cm}}c@{\hspace{.3cm}}c@{\hspace{.3cm}}c@{}}
 \multicolumn{1}{c}{} & \multicolumn{2}{c}{$n=1$}& \multicolumn{2}{c}{$n=2$}& \multicolumn{2}{c}{$n=3$}& \multicolumn{2}{c}{$n=4$}& \multicolumn{2}{c}{$n=5$} \\
$h$  & PW &  $\beta$ & PW & $\beta$ & PW & $\beta$ &PW & $\beta$ & PW & $\beta$\\ 
1/$2^{2}$  & 2.35 & 2.37 & 3.27 & 3.24 & 3.17 & 4.09 & 3.21 & 5.32 & 3.21 & 6.25 \\ 
1/$2^{3}$  & 2.23 & 2.24 & 3.19 & 3.19 & 2.77 & 4.13 & 2.82 & 5.14 & 2.82 & 6.11 \\ 
1/$2^{4}$  & 2.14 & 2.15 & 3.12 & 3.11 & 2.47 & 4.09 & 2.40 & 5.08 & 2.40 & 6.03 \\ 
1/$2^{5}$  & 2.08 & 2.09 & 3.08 & 3.07 & 2.27 & 4.06 & 2.24 & 5.05 & 2.24 & 1.13 \\ 
1/$2^{6}$  & 2.05 & 2.04 & 3.05 & 3.04 & 2.15 & 4.04 & 2.13 & 4.00 & 2.13 & -2.24 
\end{tabular}
\end{center}
\caption{Convergence results toward $\beta=0$, computed  at $G_h=[1-h,1]$. Comparison between Classical Plane Waves and Generalized Plane Waves with the $\beta$-normalization. 
Some of the associated orders of convergence are also provided.
}\label{fig:cvtoCO}
\end{figure}

Another possibility is to compare the influence of two parameters : the size of the disk $h$ and the distance $d$ between $G$ and the line $x=1$. The error $e$ depends on both parameters, so one can write $e(h,d)$. Figure \ref{fig:cvhd} displays the error computed for $h$ and $d$ convergence with the $\beta$-normalization. The $h$ convergence is clearly damaged for decreasing values of $d$. This is linked to the low frequency limit when $\beta$ goes to zero. However, looking at the $h$ convergence with $d=h$, one can see that the error $e(h,h)$ converges as the error $e(h,1/2)$ until $h=1/2^5$.
\begin{figure}
\begin{center}
\begin{tabular}
{@{}c@{\hspace{.3cm}}c@{\hspace{.3cm}}c@{\hspace{.3cm}}c@{\hspace{.3cm}}c@{\hspace{.3cm}}c@{\hspace{.3cm}}c@{\hspace{.3cm}}c@{\hspace{.3cm}}c@{\hspace{.3cm}}c@{\hspace{.3cm}}c@{\hspace{.3cm}}c@{\hspace{.3cm}}c@{}}
h$\backslash$ d & 1/$2^{1}$& 1/$2^{2}$& 1/$2^{3}$& 1/$2^{4}$& 1/$2^{5}$& 1/$2^{6}$& 1/$2^{7}$& 1/$2^{8}$& 1/$2^{9}$& 1/$2^{10}$\\ 
  1/$2^{1}$  & 4.8e-06 & 5.5e-06 & 5.5e-06 & 5.4e-06 & 5.4e-06 & 5.3e-06 & 5.2e-06 & 5.2e-06 & 5.2e-06 & 5.2e-06 \\ 
1/$2^{2}$  & 5.7e-08 & 6.4e-08 & 6.4e-08 & 6.2e-08 & 6.1e-08 & 6.0e-08 & 5.9e-08 & 5.8e-08 & 5.8e-08 & 6.9e-08 \\ 
1/$2^{3}$  & 8.3e-10 & 9.2e-10 & 9.2e-10 & 9.0e-10 & 8.8e-10 & 8.7e-10 & 9.2e-10 & 1.2e-09 & 3.5e-09 & 2.4e-08 \\ 
1/$2^{4}$  & 1.3e-11 & 1.4e-11 & 1.4e-11 & 1.4e-11 & 1.8e-11 & 3.6e-11 & 1.0e-10 & 5.4e-10 & 3.2e-09 & 2.2e-08 \\ 
1/$2^{5}$  & 2.0e-13 & 2.3e-13 & 3.5e-13 & 8.8e-13 & 6.4e-12 & 2.8e-11 & 1.2e-10 & 5.4e-10 & 3.8e-09 & 2.2e-08 \\ 
1/$2^{6}$  & 4.3e-15 & 1.7e-14 & 1.6e-13 & 7.6e-13 & 6.2e-12 & 3.0e-11 & 1.0e-10 & 6.0e-10 & 3.1e-09 & 2.0e-08 \\ 
1/$2^{7}$  & 2.4e-15 & 1.6e-14 & 1.6e-13 & 7.7e-13 & 6.2e-12 & 2.8e-11 & 9.8e-11 & 5.1e-10 & 2.9e-09 & 2.3e-08 \\ 
1/$2^{8}$  & 2.3e-15 & 1.5e-14 & 1.6e-13 & 7.9e-13 & 6.1e-12 & 2.7e-11 & 1.0e-10 & 5.0e-10 & 2.5e-09 & 1.6e-08 \\ 
1/$2^{9}$  & 2.0e-15 & 1.5e-14 & 1.6e-13 & 7.9e-13 & 5.4e-12 & 2.5e-11 & 9.7e-11 & 4.9e-10 & 2.5e-09 & 1.9e-08 \\ 
1/$2^{10}$  & 1.9e-15 & 1.4e-14 & 1.5e-13 & 7.5e-13 & 6.0e-12 & 2.5e-11 & 8.7e-11 & 5.0e-10 & 2.5e-09 & 1.8e-08 \\ 
\end{tabular}
\end{center}
\caption{Error computed on a disk of radius $h$ centered at $G=[1-d;1]$. The approximation is computed with $\beta$-normalized basis functions and with $n=5$.}
\label{fig:cvhd}
\end{figure}

\subsubsection{Along the cut-off :  $\beta=0$}
The point $G=[1,1]$ lies exactly on the vanishing line of $\beta$.  Then again concentric disks are centered on $G$ with radius $h=1/2^k$, increasing the value of $k$. Both classical Plane Waves and Generalized Plane Waves with $\beta$-normalization would provide only one function since $\beta(G)=0$. As to the Generalized Plane waves with constant-normalization, the theoretical results show that their interpolation property holds along the cut-off as well as anywhere else in the domain.

As Figures \ref{fig:cvP} and \ref{fig:cvNP}, Figure \ref{fig:cvCO} displays results that fit perfectly the theoretical result. It is an example of efficient approximation of the exact solution $u_e$ along the cut-off.
\begin{figure}
\begin{center}
\includegraphics{./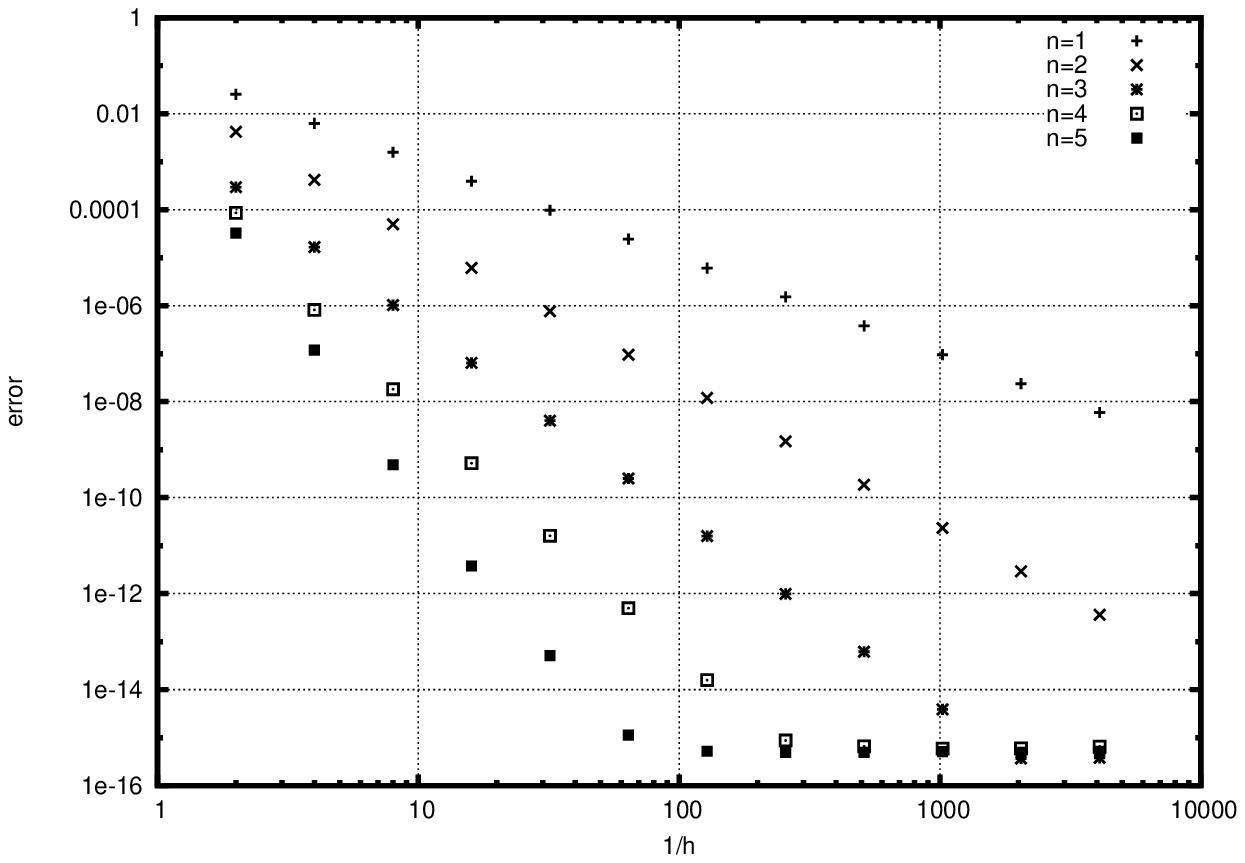}
\begin{tabular}
{@{}c@{\hspace{.3cm}}c@{\hspace{.3cm}}c@{\hspace{.3cm}}c@{\hspace{.3cm}}c@{\hspace{.3cm}}c@{}}
$h$ &  $n=1$ & $n=2$ &$n=3$ & $n=4$ & $n=5$\\ 
1/$2^{2}$  & 2.01 & 3.33 & 4.15 & 6.72 & 8.10 \\ 
1/$2^{3}$  & 2.00 & 3.09 & 4.01 & 5.49 & 7.93 \\ 
1/$2^{4}$  & 2.00 & 3.02 & 4.00 & 5.11 & 7.01 \\ 
1/$2^{5}$  & 2.00 & 3.00 & 4.00 & 5.03 & 6.20 \\ 
1/$2^{6}$  & 2.00 & 3.00 & 4.00 & 5.01 & 5.50 \\ 
\end{tabular}
\end{center}
\caption{Convergence results computed at $G=[1,1]$ where $\beta(G)=0$ using Generalized Plane Waves with the constant-normalization. 
Some of the associated orders of convergence are also provided.
}\label{fig:cvCO}
\end{figure}

\section{Application to the UWVF}\label{apps}

The Ultra-Weak Variational Formulation was proposed by B. Despr\'es in \cite{d94}. This numerical method is an example of Trefftz-based method and the idea to couple it with GPW was already proposed in \cite{LMIGBD}. This section presents the numerical method resulting from this coupling and includes corresponding numerical results.

\subsection{A generalized plane wave numerical method for smooth non constant coefficients
}
Associating to the scalar wave equation a boundary condition, consider now the following problem.
\begin{equation}\label{pbi}
\left\{
\begin{array}{rclc}
  -\Delta u+ \beta u&=&f,&(\Omega), \\
  \left(  \partial_\nu +i \gamma \right) u&=&Q\left( - \partial_\nu +i \gamma \right)u + g, &( \Gamma).
 \end{array}
\right. 
\end{equation}

\subsubsection{Notation}

Consider a domain $\Omega\subset \mathbb R^2$, with a mesh $\displaystyle \Omega = \bigcup_{k_1}^{N_h} \overline{\Omega_k }$, 
where the boundary $\bOk$ is of class $C^1$ almost everywhere. 
Let $h_k$ be the diameter of $\Ok$ and $\rho_k$ be the maximum of the diameters of
the spheres inscribed in $\Ok$. The mesh is such that
$\exists \sigma$ such that $h_k \leq \sigma \rho_k$. The refinement parameter or mesh size parameter $h$ is then defined by 
$h = \max h_k$. The terminology “regular mesh” comes from \cite{ciarlet}.

On a given element of the mesh $\Omega_k$, the center of gravity is denoted $G_k$, and $p(k)$ is the number of basis functions on $\Omega_k$. These basis functions $\{\varphi_k^l\}_{l\in[\![1,p(k)]\!]}$ are set to be zero on $\Omega\backslash \Omega_k$. This process defines a set of basis functions on $\Omega$, namely
\enn{
\mathcal E = \cup_k\mathcal E (G_k,N,p(k),q) \textrm{ where }\mathcal E (G_k,N,p(k),q)=\left\{ \varphi_k^l \right\}_{l\in[\![1,p(k)]\!]}.
}

The function space for the UWVF is denoted $V$ as 
\begin{equation*}
 V=\prod_{k \in \unNh} L^2(\bOk),
\end{equation*}
equipped with the Hermitian product
$
 \left( X,Y \right)= \sum_k \int_{\bOk} 
\frac1\gamma
X_k \overline{Y_k}$. 
It  defines a norm:  $\|X \|=\sqrt{(X,X)  }$.
In particular for any  operator  $A \in \mathcal L(V)$, the norm is 
\begin{equation*}
\|A\| =\sup_{X\neq 0} \frac{\|AX\|}{\|X\|}.
\end{equation*}
 As a consequence any element of $V$ is actually defined on the edges of the mesh elements.

\subsubsection{Adapted Ultra Weak Variational Formulation}
\label{ssec:aUWVF}

Let us start from the classical ultra weak Variational formulation.
The test functions space is defined by
\begin{equation}\label{Hk}
 H=\prod_{k=1}^{N_h} SpanH_k(\beta) \textrm{ where }H_k (\beta)= \left\{ v_k \in H^1(\Ok),\left| \begin{array}{l}
                                        (-\Delta +  \beta)v_k = 0, (\Ok),
					\\ \left( 
\trr v_k\right)_{|\bOk} \in L^2(\bOk)
                                       \end{array}
 \right. \right\}.
\end{equation}

\begin{theorem}\label{thuwvf}
 Let $u\in H^1 (\Omega)$ be a solution of problem \eqref{pbi} such that $\partial_{\nu_k} u \in L^2(\bOk)$ for any k.
Let $\gamma>0$ be a given real number.
 Then $X\in V$
 defined by $X_{|\bOk}=X_k$ with
 $X_k=((-\partial_\nu + i \gamma)u_{|\Omega_k})_{|\bOk}$ satisfies 
\begin{equation}\label{UWVF}
\begin{array}{ll}
\displaystyle
 \sum_k \left( \int_{\bOk} \frac{1}{\gamma} X_k
 \overline{\trr e_k}
- \sum_{j,j\neq k} \int_{\Sigma_{kj}} \frac{1}{\gamma} X_j
 \overline{(\partial_\nu + i \gamma)e_k} \right) 
\\ 
\displaystyle
 -\sum_{k, \Gamma_k\neq \emptyset} \int_{\Gamma_k} \frac{Q}{\gamma} X_k \overline{(\partial_\nu + i \gamma)e_k}
=-2i \sum_k \int_{\bOk} f \overline e + \sum_k \int_{\Gamma_k} \frac{1}{\gamma}g\overline{(\partial_\nu + i \gamma)e_k}, 
\end{array}
\end{equation}
for any 
$
 e= (e_k)_{k\in \unNh}\in H$.
 Conversely, if $X\in V$ is solution of $\eqref{UWVF}$ then the function $u$ defined locally  by
\begin{equation} \label{eqAA}
\left\{
\begin{array}{l}
 u_{|\Omega_k}= u_k \in H^1(\Omega_k), \\
 (-\Delta +\beta) u_k = f_{|\Omega_k}, \\
(-\partial_{\nu_k} + i \gamma) u_k = X_k,
\end{array}
\right.
\end{equation}
is the unique solution of the problem \eqref{pbi}.
\end{theorem}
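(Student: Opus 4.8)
The plan is to derive both implications from a single local computation on each element $\Ok$: Green's formula, followed by an algebraic identity that rewrites the resulting boundary integrand in terms of the impedance operators $\trr$ and $\trs$. This is the classical Cessenat--Despr\'es mechanism adapted to the present notation. Since every admissible test function $e_k\in H_k(\beta)$ satisfies $(-\Delta+\beta)e_k=0$, testing $(-\Delta+\beta)u_k$ against $\overline{e_k}$ and integrating by parts twice gives
$$\int_{\Ok} f\,\overline{e_k}=\int_{\bOk}\bigl(-\partial_\nu u\,\overline{e_k}+u\,\partial_\nu\overline{e_k}\bigr).$$
The crux is the pointwise identity
$$-\partial_\nu u\,\overline{e_k}+u\,\partial_\nu\overline{e_k}=-\frac{1}{2i\gamma}\Bigl((\trr u)\,\overline{\trr e_k}-(\trs u)\,\overline{\trs e_k}\Bigr),$$
which I would check by expanding both impedance operators. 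Multiplying by $-2i$ and summing over $k$ turns the left side into $-2i\sum_k\int_{\Ok}f\,\overline{e_k}$ and the right side into $\sum_k\frac1\gamma\int_{\bOk}\bigl((\trr u)\,\overline{\trr e_k}-(\trs u)\,\overline{\trs e_k}\bigr)$, in which the first summand already matches the leading term of \eqref{UWVF}.

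For the \emph{forward} implication I would split each $\bOk$ into the interfaces $\skj$ and the boundary part $\Gk$. Because $u\in H^1(\Omega)$ with $\partial_{\nu_k}u\in L^2(\bOk)$, the trace of $u$ and its flux match across each interface, i.e.\ $u_k=u_j$ and $\partial_{\nu_k}u=-\partial_{\nu_j}u$ on $\skj$; this yields the decisive relation $\trs u_k=X_j$ on $\skj$. On $\Gk$ the boundary condition of \eqref{pbi} reads $\trs u=Q\,\trr u+g=Q X_k+g$. Substituting these two facts into the term $\sum_k\frac1\gamma\int_{\bOk}(\trs u)\,\overline{\trs e_k}$ reproduces exactly the interface sum and the $Q$- and $g$-terms of \eqref{UWVF}, establishing that $X$ solves the variational problem.

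The \emph{converse} is where the real work lies. First I would record that each local impedance problem \eqref{eqAA} is well posed: testing its homogeneous version against $\overline{u_k}$ and taking the imaginary part forces $u_k=0$ on $\bOk$ thanks to $\gamma>0$, whence $u_k\equiv0$ by unique continuation, and existence then follows by Fredholm theory. The same argument applied to $\trs$ shows that $\trs:H_k(\beta)\to L^2(\bOk)$ is an isomorphism, so the traces $\{\trs e_k\}$ are dense in $L^2(\bOk)$. This is the key functional-analytic ingredient, and I expect it to be the main obstacle, since the whole conclusion rests on being able to test against essentially arbitrary boundary data. Writing the local Green identity above for the reconstructed $u_k$ (which satisfy $\trr u_k=X_k$ by construction) and subtracting it from the hypothesis \eqref{UWVF} leaves
$$\sum_k\sum_{j\neq k}\frac1\gamma\int_{\skj}(\trs u_k-X_j)\,\overline{\trs e_k}+\sum_{k,\Gk\neq\emptyset}\frac1\gamma\int_{\Gk}(\trs u_k-QX_k-g)\,\overline{\trs e_k}=0$$
for every $e\in H$.

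Since the $e_k$ may be chosen independently on each element and their traces $\trs e_k$ are dense, the bracketed factors must vanish: $\trs u_k=X_j$ on every $\skj$ and $\trs u_k=QX_k+g$ on every $\Gk$, the latter being precisely the boundary condition of \eqref{pbi}. Combining $\trr u_k=X_k$ with $\trs u_k=X_j$ on $\skj$, together with the symmetric pair obtained by exchanging $k$ and $j$, gives $u_k=u_j=(X_k+X_j)/(2i\gamma)$ and $\partial_{\nu_k}u_k=-\partial_{\nu_j}u_j$ on $\skj$; hence $u$ is globally $H^1$ with continuous flux and therefore satisfies $-\Delta u+\beta u=f$ in $\Omega$. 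Uniqueness for \eqref{pbi} follows from the usual energy argument under the standard contractivity assumption $|Q|\le1$: for a homogeneous solution the imaginary part of the global Green identity forces the incoming trace $\trr u$ to vanish on $\Gamma$, and the local uniqueness argument above then yields $u\equiv0$.
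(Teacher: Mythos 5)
Your proposal is correct, and it reconstructs exactly the classical Cessenat--Despr\'es argument. Note that the paper itself gives no proof of Theorem \ref{thuwvf}: it declares the result classical and refers to \cite{cd98,monk:buffa,hip1,monk2,monk3}, so there is no in-paper argument to compare against, and your derivation is the standard one from those references. The local identity $-\partial_\nu u\,\overline{e_k}+u\,\partial_\nu\overline{e_k}=-\tfrac{1}{2i\gamma}\bigl((\trr u)\,\overline{\trr e_k}-(\trs u)\,\overline{\trs e_k}\bigr)$ checks out, the interface relation $\trs u_k=X_j$ on $\skj$ and the rewriting of the boundary condition as $\trs u_k=QX_k+g$ on $\Gk$ give the forward implication, and the converse is correctly reduced to the density of the traces $\trs e_k$ in $L^2(\bOk)$, which you rightly single out as the genuine functional-analytic content (it is the surjectivity of the operator $F$ in the paper's notation, established in \cite{cd98}). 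Two caveats are worth recording. First, the right-hand side of \eqref{UWVF} should read $\int_{\Ok} f\,\overline{e_k}$ rather than $\int_{\bOk}$ --- a typo in the statement, consistent with the later definition of $(B,Y)$, which your Green-formula computation implicitly corrects. Second, your final uniqueness step for \eqref{pbi} needs $|Q|<1$ on at least a portion of $\Gamma$ together with unique continuation: with $|Q|\equiv 1$ the imaginary-part identity only yields $(1-|Q|^2)\,|\trr u|^2=0$ on $\Gamma$ and gives no information. The paper states no hypothesis on $Q$, so this assumption should be made explicit in a complete write-up.
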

This result is classical in the context of UWVF. We refer
to \cite{cd98,monk:buffa,hip1,monk2,monk3}. 
Even though the formalism and the use of approximated solutions as test functions has been described in \cite{LMIGBD}, the compact formulation is once again introduced hereafter, together with its adaptation to the generalized plane waves.
\begin{definition}
For any $f\in L^2(\Omega)$, let $E_f$ be the extension mapping defined by :
\begin{equation*}
 E_f: \left\{
\begin{array}{rcl}
 V &\rightarrow& H,
\\ Z & \mapsto& e=(e_k)_{k\in \unNh},
\end{array}
\right.
\text{ where }\forall k \in \unNh, \left\{
\begin{array}{rcl}
 (-\Delta +
 \beta) e_k &= 0 &(\Ok), \\
(-\partial_{\nu_k} + i \gamma) e_k &= Z_k &(\bOk).
\end{array}
\right.
\end{equation*}
Also define  $E$ which is the homogeneous extension
operator with 
vanishing right hand side, namely
 $E=E_0$.

 Let $F$ and $\Pi$ be the mapping defined by
\begin{equation*}
 F: \left\{
\begin{array}{rcl}
 V &\rightarrow& V,
\\ Z & \mapsto& \big( \trs E(Z)_{|\bOk} \big)_{k\in \unNh}.
\end{array}
\right.
\end{equation*}
\begin{equation*}
 \Pi: \left\{
\begin{array}{ccc}
 V &\rightarrow& V,
\\ Z_{|\skj} & \mapsto& Z_{|\sjk},
\\ Z_{|\Gk} & \mapsto & Q Z_{|\Gk}.
\end{array}
\right.
\end{equation*}
\end{definition}
Then, see \cite{cd98}, the problem \eqref{UWVF} is  equivalent to
\begin{equation}\label{eqPS}
 \left\{
\begin{array}{l}
 \text{Find }X\in V \text{ such that } \forall Y \in V
\\ (X,Y) - (\Pi X,F Y) = (B,Y),
\end{array}
\right.
\end{equation}
where the right hand side $b\in V$ is given by the Riesz theorem
\begin{equation*}
 (B,Y) = -2i 
 \int_{\Omega}
 f  \overline{E(Y)}
 +  \int_{\Gamma} \frac{1}{\gamma}g
\overline{F(Y)  }, \qquad \forall Y\in V.
\end{equation*}

The classical discretization process would be to consider $V_h\subset V$ of finite dimension and solve the problem \eqref{eqPS} on $V_h$. However, since the generalized plane waves do not belong to $H$, this formulation \eqref{eqPS} has to be adapted to the new basis functions. Indeed, there is no such thing as a unique equation satisfied by the basis functions to define an extension mapping from $V$ to $H$. The extension mapping is then defined as a one-to-one function from a subset of $V$ to the set of basis functions $\mathcal E$.

Some additional notations will be useful. The local discrete space is 
\begin{equation*}
 W_k = Span\left\{ \trr \varphi_k^l \right\}_{1\leq l \leq p(k)}
\subset L^2(\partial \Omega_k).
 \end{equation*}
The global discrete space $V^q  \subset V $ is defined by :
$
V^q=\prod_{1\leq k \leq N_h}  W_k$.
  It is therefore convenient to define the trace 
$v_k^l\in V$ by 
$$
V_k^l=\trr \varphi_k^l \mbox{ on } L^2(\partial \Omega_k), \quad
\mbox{ and }V_k^l=0\mbox{ on } L^2(\partial \Omega_{k'})\ \ k'\neq k.
$$
An equivalent  way to define $W_k$ and $V^q$ could be
$$
W_k=Span(V_k^l)_{1\leq l \leq p(k)}
\mbox{ and }V^q=Span(V_k^l)_{1\leq k\leq p(k), \ 1\leq p \leq N_h}
.
$$
Next define what are the generalizations
of operators $E$ and $F$ in this context.
 Let $E^q$ belong to ${\cal L}(V^q,\prod_{k=1}^{N_h} H^1(\Omega_k)$, be the discrete  mapping defined $\forall k \in \unNh$ and $\forall l \in [\![ 1, p(k) ]\!]$ by 
\begin{equation}\label{eq:Eq}
E^q(V_k^l)=
\varphi_k^l \mbox{ on } H^1(\Omega_k), \quad
\mbox{ and }V_k^l=0\mbox{ on } H^1( \Omega_{k'})\ \ k'\neq k.
\end{equation}
Similarly define
$F^q\in {\cal L}(V^q,V)$, $\forall k \in \unNh$ and $\forall l \in [\![ 1, p(k) ]\!]$, by 
\begin{equation*}
F^q(V_k^l)=
\trs( \varphi_k^l ) \mbox{ on } L^2(\partial \Omega_k), \quad
\mbox{ and }V_k^l=0\mbox{ on } L^2(\partial  \Omega_{k'})\ \ k'\neq k.
\end{equation*} 



\begin{remark}
From the definition of $P_\Delta$ one can see that each one of the basis functions $\varphi_l\in\mathcal E (G,N,p,q)$ is solution to a different equation, namely
\enn{\left(-\Delta+P_{\Delta,l}\right) \varphi_l=0}
where $P_{\Delta,l}=\dx^2P_l+\dy^2 P_l+(\dx P_l)^2+(\dy P_l)^2$.
Each of these equations are different, but they satisfy for each $l$: $P_{\Delta,l}-\beta=O(h^q)$.
\end{remark}

With these notations and definitions, the abstract
UWVF with generalized plane waves is defined as follows.
\begin{definition}{\bf (UWVF method  with generalized plane waves)}
Find $X_h \in V^q$ such that
 \begin{equation} \label{eq:mmp} 
 \forall Y_h \in V^q, (X_h,Y_h)_V - (\Pi X_h, F^q Y_h)_V = (B^q,Y_h)_V
 \end{equation}
with the right hand side given by
\e{\label{eq:bq}
(B^q,Y_h)_V=
-2i 
 \int_{\Omega}
 f  \overline{E^q(Y_h)}
 +  \int_{\Gamma} \frac{1}{\gamma}g
\overline{F^q(Y_h)  }, \qquad \forall Y_h\in V^q.
}
\end{definition}

\subsubsection{Interpolation interpretation}

$P_h$ denotes the orthogonal projection in $V$ on $V^q$.
\begin{proposition}
Let $u$ be a solution of a homogeneous scalar wave equation problem. Assume that $u$ is of class $C^{n+1}$ with $n \geq 1$. Let $X\in V$ satisfy
$X_{|\bOk}= (-\partial_{\nu_k} + i\omega)u_{|\bOk}$.
 The number of basis functions $Z_{k,l}=\trr \varphi_k^l$ per element $\Ok$ is fixed $p = 2n+1$. 
Let us assume, for the sake of the simplicity of
the proof, that their directions are fixed once for all. Then
$\exists C > 0$ depending on $n$ and the problem’s data $\gamma$ such that
$$\|(I - P_h )X\|_V ≤ Ch^{n-1/2} \|u\|_{C^{n+1}(\Omega)} . $$
\end{proposition}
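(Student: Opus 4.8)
The plan is to exploit that $P_h$ is the \emph{orthogonal} projection onto $V^q$, so that $\|(I-P_h)X\|_V \le \|X-Y\|_V$ for \emph{any} competitor $Y\in V^q$; it then suffices to construct one good $Y$ and estimate $\|X-Y\|_V$ directly. The natural candidate comes from Theorem \ref{th:u-ua}: on each element $\Ok$, taking interpolation point $G=G_k$ and the constant-normalization $N=i$, the theorem produces $u_a^k\in \mathrm{Span}\,\mathcal E(G_k,N,p,q)$ approximating $u$ to order $n+1$ and $\nabla u$ to order $n$. I would then set $Y_k:=\trr u_a^k$ on $\bOk$; since $u_a^k=\sum_l x_l\varphi_k^l$, linearity gives $Y_k=\sum_l x_l\,\trr\varphi_k^l\in W_k$, hence $Y\in V^q$, and moreover $X_k-Y_k=\trr(u-u_a^k)$ on $\bOk$.

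Next I would estimate the trace error pointwise on each $\bOk$. Using $|\partial_{\nu_k}v|\le\|\nabla v\|$ one has
\[
|X_k-Y_k|=\big|\trr(u-u_a^k)\big|\le \|\nabla(u-u_a^k)\| + \gamma\,|u-u_a^k|.
\]
The two inequalities of Theorem \ref{th:u-ua} bound the right-hand side at $M\in\bOk$ by $C_{N,\Omega}\big(|M-G_k|^{n}+\gamma\,|M-G_k|^{n+1}\big)\|u\|_{\mathcal C^{n+1}(\Omega)}$. Since $G_k\in\overline{\Ok}$ and $\mathrm{diam}\,\Ok=h_k\le h$, every $M\in\bOk$ satisfies $|M-G_k|\le h$, so for $h$ bounded this is at most $Ch^{n}\|u\|_{\mathcal C^{n+1}(\Omega)}$ with $C$ \emph{uniform}: the interpolation constant depends only on $N$, on $n$, and on $\beta$ together with its derivatives, all bounded on $\Omega$, hence the same for every element.

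Then I would integrate and sum. On one element $\int_{\bOk}\frac1\gamma|X_k-Y_k|^2 \le \frac{C^2}{\gamma}\,h^{2n}\,\|u\|_{\mathcal C^{n+1}(\Omega)}^2\,|\bOk|$, and the regularity of the mesh ($h_k\le\sigma\rho_k$) controls both the perimeter $|\bOk|\le Ch$ and the element count $N_h\le C|\Omega|\,h^{-2}$. Summing over $k$ yields $\|X-Y\|_V^2\le C\,h^{2n}\cdot h\cdot h^{-2}\,\|u\|_{\mathcal C^{n+1}(\Omega)}^2 = C\,h^{2n-1}\,\|u\|_{\mathcal C^{n+1}(\Omega)}^2$, and taking the square root gives the claimed $h^{\,n-1/2}$ rate together with $\|(I-P_h)X\|_V\le\|X-Y\|_V$.

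The main obstacle, and the only place where real care is needed, is this last bookkeeping step: the exponent $n-1/2$ rather than $n$ is produced entirely by the passage from the per-element, pointwise estimates of Theorem \ref{th:u-ua} to the global $L^2$-norm over the mesh skeleton. The mismatch between the codimension-one measure of each $\bOk$ (of order $h$) and the volumetric count of elements (of order $h^{-2}$) produces a net factor $h^{-1}$ inside the squared norm, i.e. the half-order loss. Making this rigorous requires the regular-mesh hypothesis to bound perimeters and element counts uniformly, and requires checking that the constant of Theorem \ref{th:u-ua} is uniform across elements --- which is precisely why the constant-normalization $N=i$, bounded away from zero (see Subsection \ref{ssec:estCN}), rather than the $\beta$-normalization, is the safe choice here.
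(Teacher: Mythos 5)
Your proposal is correct and follows essentially the same route as the paper: apply Theorem \ref{th:u-ua} element by element, take the trace $\trr(u-u_a)$ as the competitor in $V^q$, bound it pointwise by the gradient and value estimates, and recover the exponent $n-1/2$ from the $O(h)$ measure of each $\bOk$ against the $O(h^{-2})$ element count guaranteed by mesh regularity. Your added remarks (making the best-approximation property of $P_h$ explicit, and fixing the constant-normalization to keep the interpolation constant uniform across elements) are sensible clarifications of points the paper leaves implicit, not a different argument.
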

\begin{proof}
Applying Theorem \ref{th:u-ua} on every element of the mesh, one gets $\exists u_a \in \mathcal E$, $u_a=\sum_{k,l} x_k^l\varphi_k^l$ such that $\forall \overrightarrow x\in \Omega$:
\begin{equation*}
\left\{
\begin{array}{l}
\left| u(\overrightarrow x)-u_a(\overrightarrow x)\right| \leq C h^{n+1} \left\| u \right\|_{\mathcal C^{n+1}(\Omega)} ,
\\\nonumber \left\| \nabla u(\overrightarrow x)-\nabla u_a(\overrightarrow x)\right\| \leq C h^{n} \left\| u \right\|_{\mathcal C^{n+1}(\Omega)},
\end{array}
\right.
\end{equation*}
where $C$ depends on $\Omega$. If $X_a\in V$ is defined by $(X_a)_{|\bOk}= (-\partial_{\nu_k} + i\omega)(u_a)_{|\bOk}$, then 
\enn{\tab{rl}{
\|X-X_a\|^2_{L^2(\bOk)} &\displaystyle \leq 2\int_\bOk \|\nabla u - \nabla u_a\|^2 + 2 \gamma^2\int_\bOk |u-u_a|^2
\\ &\displaystyle  \leq 2C^2h^{2n}\int_\bOk (1+\gamma^2h^2) \|u\|_{C^{n+1}(\Omega)},
}}
so that for $h$ small enough
\enn{
\|X-X_a\|\leq C h^{n+1/2} \sqrt{N_h} \|u\|_{C^{n+1}(\Omega)}.
}
The results then stems from the fact that, for a regular mesh, the total number of elements can be bounded by $C/h^2$.
\end{proof}


\subsection{Numerical results}
The numerical method presented in Subsection \ref{ssec:aUWVF} has been implemented in two dimensions. The main difference with a classical UWVF code lies in the need for a quadrature formula. Indeed, there is no exact integration formula when the basis functions are no more classical Plane Waves. Results corresponding to two Newton-Cotes formulas will be compared: a Boole formula with five points, a Weddle formula with seven points and a formula with ten points.

\subsubsection{Comparing $h$ convergence results on a test case}
Consider the system
\begin{equation}
\left\{\begin{array}{l}
-\Delta u + (x-1) u = 0,
\\(-\partial_\nu + i \gamma )u=g,
\end{array}\right.
\end{equation}
on the domain $[-6,3]\times[-1,1]$. This problem presents a cut-off along the line $x=1$. The boundary condition corresponds to the exact solution $u_e = Airy(-x) e^{iy}$. An example of computed solution is represented on Figure \ref{fig:sol10SQ75}.

\begin{figure}
\begin{center}
\includegraphics[height=6.5cm]{./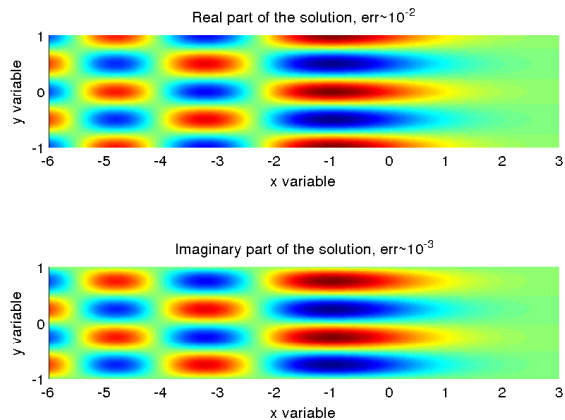}
\end{center}
\caption{The solution approximating $u_e=Airy(x)e^{iy}$ on the computational domain, computed with the 10 points quadrature formula. The basis functions used for the computation are built with the $\beta$-normalization using $p=7$ basis functions per element and the order of approximation $q=5$.}
\label{fig:sol10SQ75}
\end{figure}

Figure \ref{fig:compQF} evidences the limitation of low order quadrature formulas. It compares results obtained with Boole (5 points), Weddle (7 points) and the 10 points quadrature formulas. All the results are computed with $p=2n+1$ basis functions per element and with the order of approximation $q=n+1$. The results obtained with $n=3$ are plotted with different cross-marks, whereas those obtained with $n=4$ are plotted with different square- and disk-marks. 
For $n=3$ the choice of quadrature makes obviously no difference. However, to obtain a higher order convergence, there is a price to pay regarding quadrature formula. Indeed, for $n=4$ the situation is different : the computations performed with Boole formula do not reach the high convergence rate obtained with both Weddle and the 10 points formulas. This is due to the pollution introduced by the low order quadrature formula.
\begin{figure}
\begin{center}
\includegraphics[height=6.5cm]{./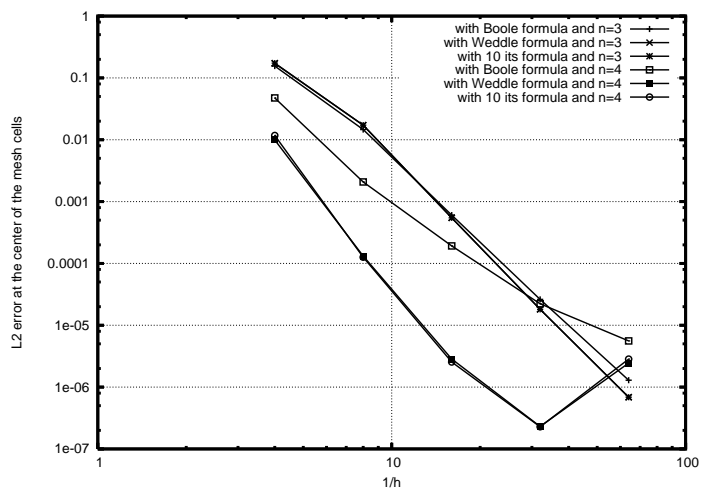}
\end{center}
\caption{$h$-convergence results displayed with different cross marks for $n=3$ and disk and square marks for $n=4$. The solution is computed with $p=2n+1$ basis functions per element with the order of approximation $q=n+1$. Results obtained with the three different quadrature formula are displayed, namely Boole, Weddle and the 10 points formula. These results are computed with the $\beta$-normalization. The error is represented with respect to $N_h = 1/h$ where $h$ stands for the size of the mesh.}
\label{fig:compQF}
\end{figure}

\begin{figure}
\begin{center}
\includegraphics[height=5cm]{./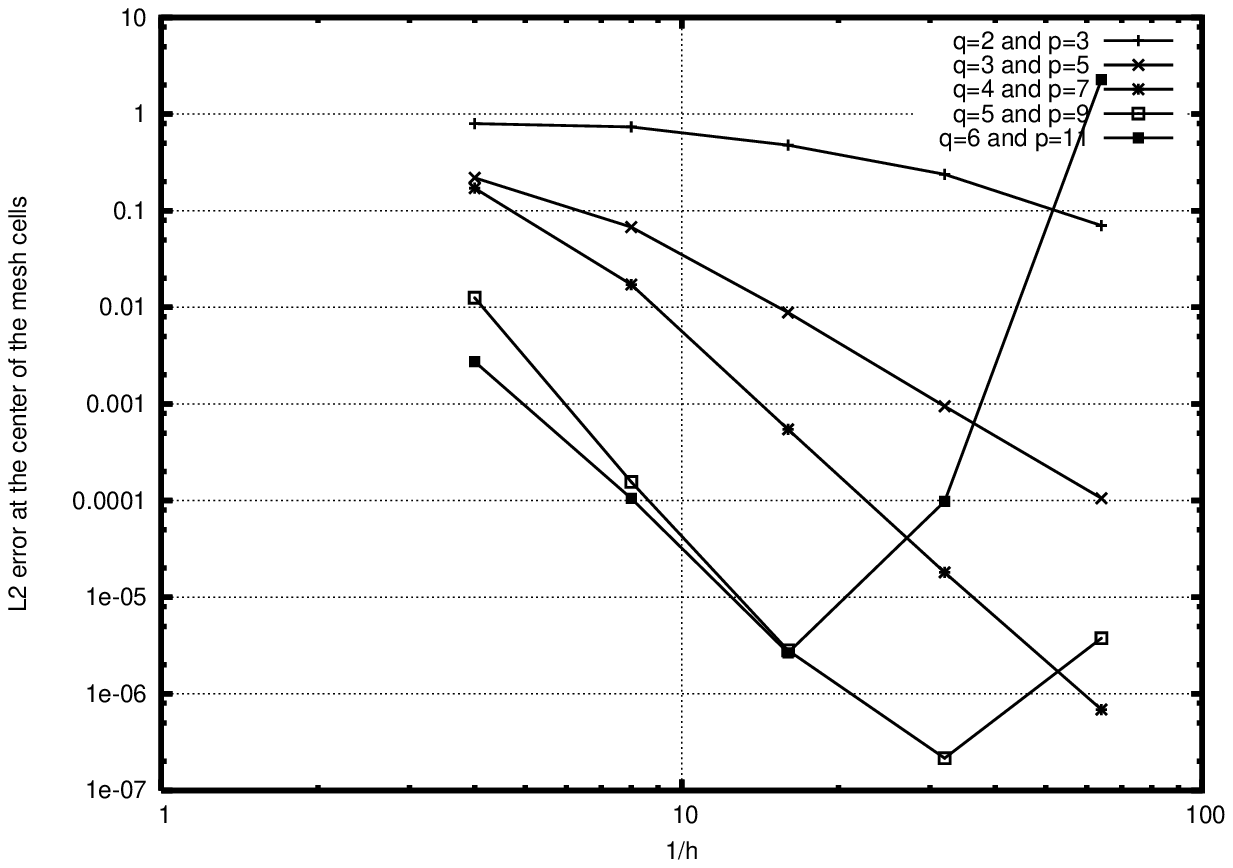}
\includegraphics[height=5cm]{./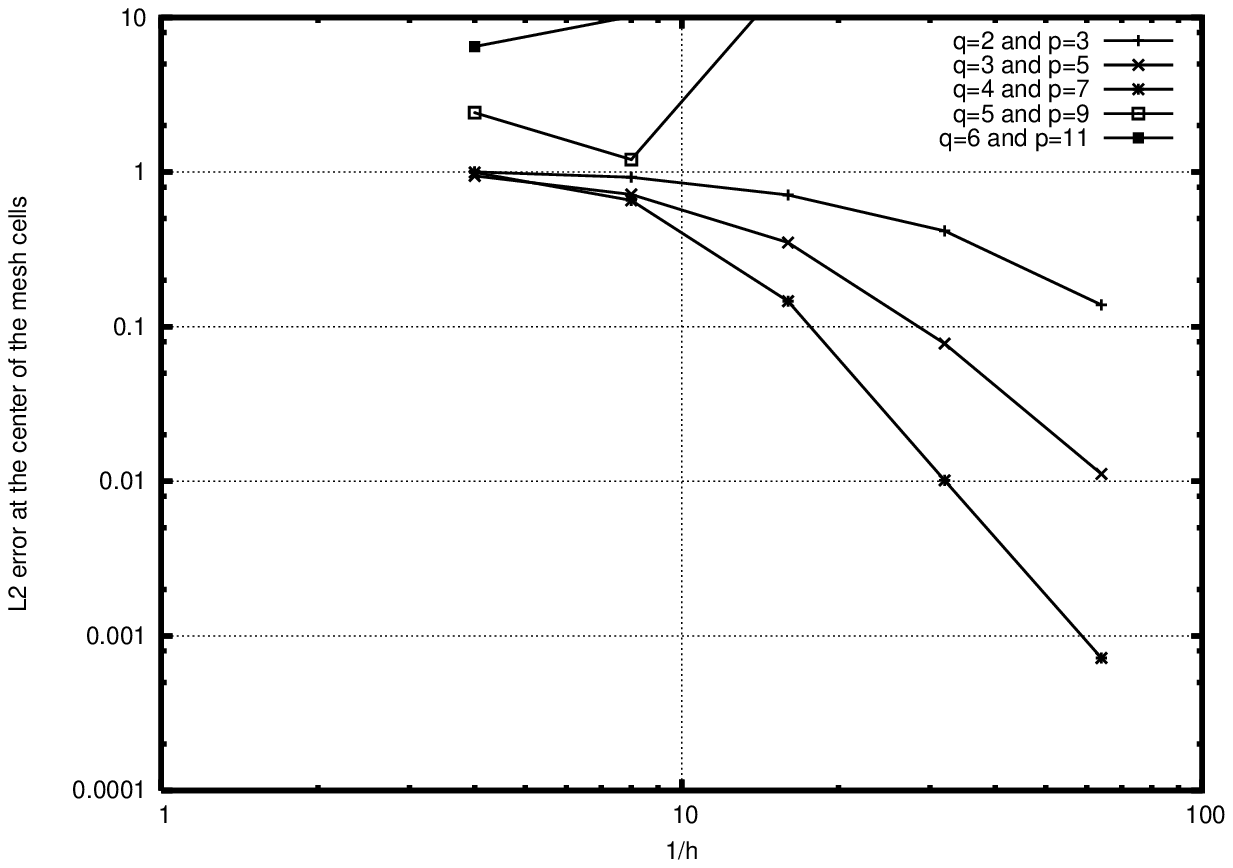}
\end{center}
\caption{$h$-convergence results displayed for $\beta$-normalization on the left and constant-normalization on the right, both using Weddle quadrature formula to compute the integrals. The relative discrete $L^2$ error computed at the centers of the mesh cells is represented depending on the inverse of the mesh size $h$. The results are computed with $p=2n+1$ and $q=n+1$, for $n$ between 1 and 5.}
\label{fig:Weddlecv}
\end{figure}
Figure \ref{fig:Weddlecv} compares convergence results with respect to the mesh size $h$, all the integral being computed with a Weddle quadrature formula. Results obtained with the $\beta$-normalization and with the constant-normalization are displayed respectively on the left and on the right, at the same scale. As suggested by the interpolation theorem, all the results are computed with $q=n+1$ and $p=2n+1$ where $n\in \mathbb N$. The left hand part of the figure displays results computed with the $\beta$-normalization. 
 For $p=11$, the ill-conditioning of the system matrix prevent the method from converging. It is remarkable that even though the basis functions with the $\beta$-normalization do not exist along the cut-off, this does not prevent the method from converging. The right hand side of the figure displays results computed with the constant-normalization. It shows that even if the constant-normalization ensures high order interpolation along the cut-off, the numerical method obtained by described in this section provides much smaller error when used with the $\beta$-normalized basis functions than with the constant-normalized basis functions. It also seems that the method with the constant-normalization is inadequate when the number $p$ of basis functions per element increases.

\subsubsection{A first physical test case}
This case was proposed by St\'ephane Heuraux as a second step toward reflectometry applications, for which the presence of a plasma cut-off is crucial. It models a wave sent in a plasma by an antenna from the wall of a fusion reactor, see \cite{LMth}. The antenna is represented by a wave guide added outside the reactor on a wall plus a horn inside the reactor. 

The geometry is described in Figure \ref{fig:domain}. The domain $\Omega$ is a $L\times L$ square, the  width and length of the waveguide are $l_0$ and $4l_0$. The size of the domain is set to be $L=50 l_0$, where $l_0$ is the wavelength of the incoming signal in the horn.  The cut-off is set at $x=40l_0$. The heterogeneous medium is modeled by the coefficient
\e{\beta(x)=
\left\{
\tab{ll}{
-\kappa^2,&x<2, \\
 -\kappa^2 (x-4)/(2),&x\geq 2.
}
\right.
}
One gets a wave propagating from the wave guide through the horn toward the right end of the domain, reflected by the cut off situated at $x=4$. See Figure \ref{fig:WaveCut}. This result was computed with the normalization $N=\sqrt{\beta}$.
\begin{figure}
\begin{center}
\includegraphics[height=6cm]{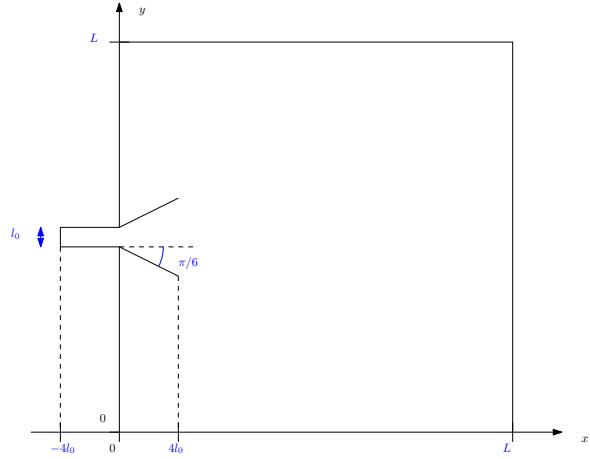}
\end{center}
 \caption{Slice of tokamak, specifying the domain parameters: the wave guide width, the shape of the horn and the size of the main part of the domain.}
\label{fig:domain}
\end{figure}
\begin{figure}
\begin{center}
\begin{tabular}{c}
\includegraphics[width=.8\textwidth]{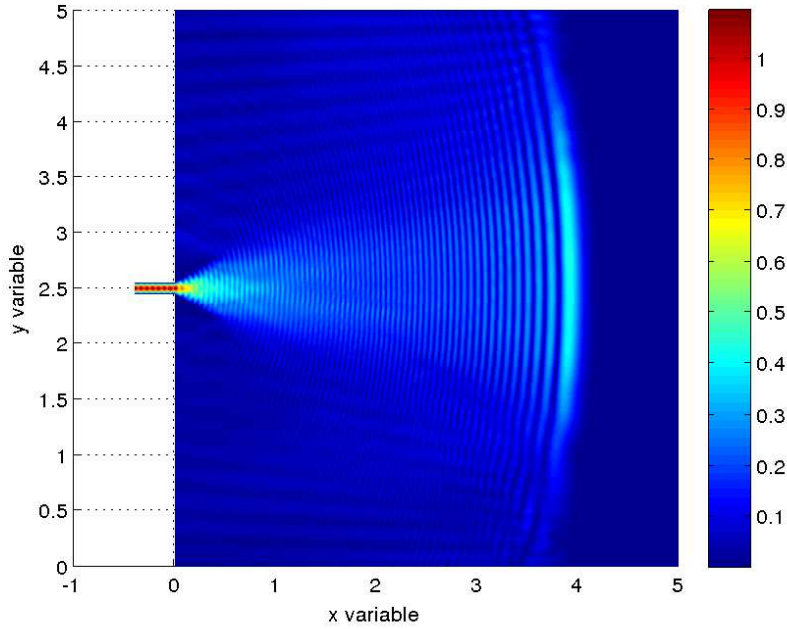}
 \end{tabular}
 \caption{Wave reflected by the cut off. Result computed using generalized plane waves designed with $N=\sqrt{\beta}$ and the UWVF, for $p=7$ and $q=4$.
Modulus of the computed solution.}
\label{fig:WaveCut}
\end{center}
\end{figure}

\section{Conclusion}

A procedure to design a set of generalized plane waves that are locally approximated solution of the scalar wave equation with \underline{smooth non constant coefficient} has been successfully developed. Both theoretical and numerical results evidence the \underline{high order approximation}. 
This procedure can easily be generalized to many differential operators, as described in \cite{LMth}.
Moreover a natural idea would be to extend the generalization process from the phase to the amplitude of plane waves, by considering a looking for a shape function as $\varphi= Qe^P$ where $P$ and $Q$ are two polynomials.

These generalized plane waves have been coupled with the UWVF to obtain a numerical method adapted to problems with smooth vanishing coefficients. This resulting numerical method has been described, and the first numerical results are promising but no theoretical convergence is available yet. 
The behavior of the method along a cut-off requires further investigation, since the comparison between numerical results obtained with the two normalizations of the basis functions is very not yet explained theoretically.

\appendix

\section{Chain rule in dimension 1 and 2}
For the sake of completeness, this section is dedicated to describing the formula to derive a composition of two functions, in dimensions one and two. 
A wide bibliography about this formula is to be found in \cite{ma}. It is linked to the notion of partition of an integer or the one of a set. The 1D version is not actually used in this work but is displayed here as a comparison with a 2D version, mainly concerning this notion of partition.
\subsection{Faa Di Bruno Formula}\label{app:FDBf}
Faa Di Bruno formula gives the $m$th derivative of a composite function with a single variable. It is named after Francesco Faa Di Bruno, but was stated in earlier work of Louis F.A. Arbogast around 1800, see \cite{007}.

If $f$ and $g$ are functions with sufficient derivatives, then
\enn{
\frac{d^m}{dx^m}f(g(x)) = m!\sum f^{(\sum_k b_k)} (g(x)) \prod_{k=1}^{m} \frac{1}{b_k!}\left(  \frac{g^{(k)}(x)}{k!} \right)^{b_k},
}
where the sum is over all different solutions in nonnegative integers $(b_k)_{k\in[\![1,m]\!]}$ of $\sum_k k b_k = m$. These solutions are actually the partitions of $m$.

\subsection{Bivariate version}\label{app:bivFDB}
The multivariate formula has been widely studied, the version described here is the one from \cite{CS} applied to dimension $2$. A linear order on $\mathbb N^2$ is defined by: $\forall (\mu,\nu)\in\left(\mathbb N^2\right)^2$, the relation $\mu\prec\nu$ holds provided that 
\begin{enumerate}
\item $\mu_1+\mu_2<\nu_1+\nu_2 $; or
\item $\mu_1+\mu_2=\nu_1+\nu_2 $ and $\mu_1<\nu_1$.
\end{enumerate}
If $f$ and $g$ are functions with sufficient derivatives, then
\enn{
\dx^i \dy^j 
f(g(x,y)) =i!j! \sum_{1\leq\mu\leq i+j} f^{\mu}(g(x,y)) \sum_{s=1}^{i+j} \sum_{p_s((i,j),\mu)} \prod_{l=1}^s \frac{1}{k_l!}\left( \frac{1}{i_l!j_l!} \dx^{i_l}\dy^{j_l} 
 (g(x,y))\right)^{k_l},
}
where the partitions of $(i,j)$ are defined by the following sets: $\forall \mu\in[\![1,i+j]\!]$, $\forall s\in[\![1,i+j]\!]$,
\enn{
p_s((i,j),\mu)=\left\{ (k_1,\cdots,k_s;(i_1,j_1),\cdots,(i_s,j_s)):k_i>0,0\prec(i_1,j_1)\prec\cdots\prec(i_s,j_s),\phantom{\sum_{l=1}^s }\right.}
\enn{\left. \sum_{l=1}^s k_l=\mu,\sum_{l=1}^s k_li_l=i,\sum_{l=1}^s k_lj_l=j\right\}.
}
See \cite{ardi} for a proof of the formula interpreted in terms of collapsing partitions.



\begin{acknowledgements}
I would like to thank Peter Monk for bringing to my attention the importance of such interpolation properties and for his hospitality during my visit to the University of Delaware, where I developed a 2D UWVF code with GPW for the scalar wave equation. I thank Teemu Luostari for providing his 2D PW-UWVF code for elasticity equations. This visit was funded by the Fondation Pierre Ledoux. I would also like to thank Bruno Despr\'es for his help.
\end{acknowledgements}


\begin{thebibliography}{99} 

\bibitem
{poll} 
\textsc{Babuka, I. and Sauter, S. A.} (2000)
\newblock Is the pollution effect of the FEM avoidable for the Helmholtz's equation considering high wave numbers?
\newblock \emph{SIAM Rev.},\textbf{42}, pp. 451–484.

\bibitem
{BetT}
\textsc{Betcke, T.} (2005)
\newblock Numerical computation of eigenfunctions of planar regions,
\newblock Thesis.

\bibitem
{Bet}
\textsc{Betcke, T. \& Phillips, J.} (2012)
\newblock Approximation by dominant wave directions in plane wave methods,
\newblock \emph{J. Sound Vib.}, submitted.

\bibitem
{monk:buffa}
\textsc{Buffa, A. \& Monk, P.} (2008)
\newblock Error estimates for the Ultra Weak Variational Formulation of the Helmholtz equation,
\newblock \emph{ESAIM: Mathematical Modelling and Numerical Analysis}, November, \textbf{42}, 925--940. 
%
\bibitem
{cd98} 
\textsc{Cessenat, O. \& Despr\'es, B.} (1998)
\newblock Application of an ultra weak variational formulation of elliptic PDEs to the two dimensional Helmholtz problem,
\newblock \emph{SIAM J. Numer. Anal.}, \textbf{vol. 55}, no1, 255--299.
%

\bibitem
{ciarlet}
\newblock  \textsc{Ciarlet, P. G.} (1978)
\newblock The finite element method for elliptic problems.
\newblock \emph{Studies in Mathematics and its Applications}, \textbf{Vol. 4}. North-Holland Publishing Co., Amsterdam-New York-Oxford. 

\bibitem
{CS}
\newblock  \textsc{Constantine, G. M.; Savits, T. H.} (1996)
\newblock  A multivariate Faà di Bruno formula with applications.
\newblock  \emph{Trans. Amer. Math. Soc.} \textbf{348}, no. 2, 503–520. 

\bibitem
{007}
\newblock  \textsc{Craik, A. D. D.}(2005)
\newblock  Prehistory of Faà di Bruno's formula.
\newblock \emph{Amer. Math. Monthly} \textbf{112}, no. 2, 119–130. 


\bibitem
{d94} 
\textsc{Despr\'es, B.} (1994)
\newblock Sur une formulation variationnelle de type ultra-faible,
\newblock \emph{C. R. Acad. Sci. Paris Sér. I Math.}  \textbf{318}  no. 10, 939--944.
%


\bibitem
{nc}
\textsc{Farhat, C., Harari, I. \& Franca, L.} (2001)
\newblock The discontinuous enrichment
method, 
\newblock \emph{Computer Methods in Applied Mechanics and Engineering}, \textbf{190}, 6455--6479.

\bibitem
{git}
\textsc{Gittelson, C. J., Hiptmair, R. \& Perugia, I.} (2009)
\newblock Plane wave discontinuous Galerkin methods: Analysis of the h-version,
\newblock \emph{ESAIM: Mathematical Modelling and Numerical Analysis},
\textbf{43}, 297--331. 


\bibitem
{ardi}
\textsc{Hardy, Michael} (2006)
\newblock Combinatorics of partial derivatives. (English summary)
\newblock \emph{Electron. J. Combin.} \textbf{13}, no. 1, Research Paper 1, 13 pp. 

\bibitem
{hen}
\textsc{Henrici} (1957)
\newblock A Survey of I. N. Vekua's Theory of Elliptic Partial Differential Equations With Analytic Coefficients
\newblock \emph{Z. Angew. Math. Phys.} \textbf{8}, 169–203. 

\bibitem
{LFHipt}
\textsc{Hiptmair, R.} (2008)
\newblock {Low frequency stable maxwell formulations}
\newblock \emph{Oberwolfach Reports}
    
\bibitem
{hip1}
\textsc{Hiptmair, R., Moiola, A. \& Perugia, I.} (2009)
\newblock Plane wave discontinuous
Galerkin methods for the 2D Helmholtz equation: analysis of the p-version,
\newblock Preprint 2009-20, SAM Report, ETH Z\"urich, Switzerland.

 \bibitem
 {HMP13}
 \textsc{Hiptmair, R., Moiola, A. \& Perugia, I.} (2013)
 \newblock Error analysis of Trefftz-discontinuous Galerkin methods for the time-harmonic Maxwell equations. 
 \newblock \emph{Math. Comp.} 82 , \textbf{no. 281}, 247–268.

\bibitem
{monk2}
\textsc{Huttunen, T., Malinen, M. \& Monk, P.} (2007)
\newblock Solving Maxwell’s equations using the ultra weak variational formulation,
\newblock \emph{Journal of Computational Physics},
\textbf{223}, Issue 2, 731--758.
%

\bibitem
{monk3}
\textsc{Huttunen, T., Monk, P. \& Kaipio, J. P.} (2002)
\newblock Computational Aspects of the Ultra-Weak Variational Formulation,
\newblock \emph{Journal of Computational Physics},
\textbf{182}, Issue 1, 27--46.
%

\bibitem
{imbert}
\textsc{Imbert-G\'erard, L.-M. \& Despr\'es, B.} (2011)
\newblock A generalized plane wave numerical 
method for smooth non constant coefficients,
\newblock Tech. report R11034, LJLL-UPMC.

\bibitem
{LMIGBD}
\textsc{Imbert-Gerard, L.-M. \& Despres, B.} (2013)
\newblock A generalized plane-wave numerical method for smooth nonconstant
coefficients 
\newblock \emph{IMA Journal of Numerical Analysis} , doi: 10.1093/imanum/drt030

\bibitem
{LMth}
\textsc{Imbert-Gerard, L.-M.} (2013)
\newblock  Mathematical and numerical problems of some wave phenomena appearing in magnetic plasmas
\newblock \emph{PhD thesis} , oai:tel.archives-ouvertes.fr:tel-00870184

\bibitem
{ma}
\textsc{Ma, T.-W.} (2009)
\newblock Higher chain formula proved by combinatorics. 
\newblock \emph{Electron. J. Combin.} \textbf{16}, no. 1, Note 21, 7 pp.

\bibitem
{melenk2}
\textsc{Melenk, J.} (1999)
\newblock Operator adapted spectral element methods I: harmonic and generalized
harmonic polynomials,
\newblock \emph{Numerische Mathematik}, \textbf{84}, 35--69.

\bibitem
{Euan}
\textsc{Moiola, A. \& Spence, E. A.} (preprint 2013)
\newblock Is the Helmholtz equation really sign-indefinite?
%
%
\bibitem
{Moiola}
 \textsc{Moiola, A., Hiptmair, R. \& Perugia, I.} (2011)
\newblock Vekua theory for the Helmholtz operator. 
\newblock \emph{Z. Angew. Math. Phys.} \textbf{62}, no. 5, 779–807.

\bibitem
{plu2}
\textsc{Pluymers, B., Hal, B., Vandepitte, D. and Desmet, W.} (2007)
\newblock Trefftz-Based Methods for Time-Harmonic Acoustics,
\newblock \emph{Archives of Computational Methods in Engineering}, 14, \textbf{4},
pp. 343-381.

\bibitem
{Tez}
\textsc{Tezaur, R.}
\newblock Discontinuous enrichment method for smoothly variable wavenumber medium-frequency Helmholtz problems.
\newblock \emph{Proceedings of the 11th International Conference on
Mathematical and Numerical Aspects of Waves,}
Tunis, Tunisie, 2013, pp.353-354.

\bibitem
{trefftz}
\textsc{Trefftz E.} (1926) 
\newblock Ein Gegenstück zum Ritzschen Verfahren.
\newblock \emph{Proceedings of the 2nd international congress on applied mechanics}, Zürich, Switzerland, pp. 131–137.

\end{thebibliography}

\end{document}